\newtheorem{theorem}{Theorem}[section]
\newtheorem{lemma}[theorem]{Lemma}
\newtheorem{proposition}[theorem]{Proposition}
\newtheorem{corollary}[theorem]{Corollary}
\theoremstyle{definition}
\newtheorem{example}[theorem]{Example}
\newtheorem{remark}[theorem]{Remark}
\newcommand{\id}{\text{id}}
\newcommand{\FPdim}{\text{\rm FPdim}}
\newcommand{\Hom}{\text{Hom}}
\newcommand{\gr}{{\rm gr}}
\newcommand{\Rep}{{\rm Rep}}
\newcommand{\Cyc}{{\rm Cyc}}
\newcommand{\Alt}{{\rm Alt}}
\newcommand{\ot}{\otimes}
\newcommand{\ben}{\begin{enumerate}}
\newcommand{\een}{\end{enumerate}}
\newcommand{\Rad}{{\rm Rad}}
\newcommand{\Vect}{{\rm Vec}}
\newcommand{\dd}{{\rm d}}
\numberwithin{equation}{section}
\begin{document}

\title[symmetric tensor categories with the Chevalley property]{Finite symmetric tensor categories with the Chevalley property in characteristic $2$}

\author{Pavel Etingof}
\address{Department of Mathematics, Massachusetts Institute of Technology,
Cambridge, MA 02139, USA} \email{etingof@math.mit.edu}

\author{Shlomo Gelaki}
\address{Department of Mathematics, Iowa State University, Ames, IA 50011, USA} \email{gelaki@iastate.edu}

\date{\today}

\keywords{Symmetric tensor categories, Chevalley property, quasi-Hopf algebras, associators, Sweedler cohomology, finite group schemes}

\dedicatory{Dedicated to Nicol\'{a}s Andruskiewitsch for his 60th birthday}

\begin{abstract}
We prove an analog of Deligne's theorem for finite symmetric tensor categories $\mathcal{C}$ with the Chevalley property over an algebraically closed field $k$ of characteristic $2$. Namely, we prove that every such category $\mathcal{C}$ admits a symmetric fiber functor to the symmetric tensor category $\mathcal{D}$ of representations of the triangular Hopf algebra $(k[\dd]/(\dd^2),1\ot 1 + \dd\ot \dd)$. Equivalently, we prove that there exists a unique finite group scheme $G$ in $\mathcal{D}$ 
such that $\mathcal{C}$ is symmetric tensor equivalent to $\Rep_{\mathcal{D}}(G)$. Finally, we compute the group $H^2_{\rm inv}(A,K)$ of equivalence classes of twists for the group algebra $K[A]$ of a finite abelian $p$-group $A$ over an arbitrary field $K$ of characteristic $p>0$, and the Sweedler cohomology groups $H^i_{\rm{Sw}}(\mathcal{O}(A),K)$, $i\ge 1$, of the function algebra $\mathcal{O}(A)$ of $A$. 
\end{abstract}

\maketitle

\section{Introduction}

The main objective of this paper is to classify finite symmetric tensor categories with the Chevalley property over an algebraically closed field $k$ of characteristic $2$. This completes the classification of finite integral symmetric tensor categories with the Chevalley property over an algebraically closed field of characteristic $p>0$, which  for $p>2$ was established in \cite{eg5}, since by \cite[Theorem 1.5]{o}, integrality follows from the rest of the conditions for $p=2,3$. 
 
Let $\alpha_2$ be the Frobenius kernel of the additive group $\mathbb{G}_a$. Then $k\alpha_2=k[\dd]/(\dd^2)$ with $\dd$ primitive. Let $\mathcal{D}:=\Rep(\alpha_2,1\ot 1 + \dd\ot \dd)$ be the symmetric tensor category of finite dimensional representations of the triangular Hopf algebra $k[\dd]/(\dd^2)$ equipped with the $R$-matrix $1\ot 1 + \dd\ot \dd$   
\footnote{$\mathcal{D}$ 
may be considered as a non-semisimple analog in characteristic $2$ of the category $\text{sVec}$ of supervector spaces, see \cite{v}.}. Recall \cite{v} that an object in $\mathcal{D}$ is a finite dimensional $k$-vector space $V$ together with a linear map  $\dd:V\to V$ satisfying $\dd^2=0$. In particular, $\mathcal{D}$ has two indecomposable objects, namely, the unit object (i.e., the vector space $k$ with $\dd=0$), and the two dimensional vector space $k^2$ with $\dd$ the strictly upper triangular matrix $E_{12}$.

Recall that a finite group scheme in $\mathcal{D}$ is, by definition, a finite dimensional {\em cocommutative} Hopf algebra $H$ in $\mathcal{D}$. In particular, this means that $\dd:H\to H$ is a derivation of $H$ satisfying $\dd^2=0$, and 
$$
\Delta(h)=(1\ot 1 + \dd\ot \dd)(\Delta(h))_{21},\,\,\,h\in H.
$$

We can now state our main result (compare with  \cite[Conjecture 1.3]{o}).

\begin{theorem}\label{classuniptr2}
Let $\mathcal{C}$ be a finite symmetric tensor category with the Chevalley property over an algebraically closed field $k$ of characteristic $2$. Then $\mathcal{C}$ admits a symmetric fiber functor to $\mathcal{D}$. Thus, there exists a unique finite group scheme $G$ in $\mathcal{D}$ 
such that $\mathcal{C}$ is symmetric tensor equivalent to the category $\Rep_{\mathcal{D}}(G)$ of finite dimensional representations of $G$ which are compatible with the action of $\pi_1(\mathcal{D})$. 
\end{theorem}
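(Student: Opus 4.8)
The plan is to produce a symmetric fiber functor $F\colon\mathcal{C}\to\mathcal{D}$ and then invoke Deligne's relative Tannakian formalism: the group of tensor automorphisms $G=\underline{\Aut}^{\ot}(F)$ is a finite (since $\mathcal{C}$ is finite) group scheme in $\mathcal{D}$ with $\mathcal{C}\simeq\Rep_{\mathcal{D}}(G)$, where the representations are automatically compatible with $\pi_1(\mathcal{D})$ because objects of $\mathcal{C}$ are reconstructed as $G$-modules internal to $\mathcal{D}=\Rep(\pi_1(\mathcal{D}))$; faithfulness of $F$ forces the uniqueness of $G$. Thus everything reduces to constructing $F$.

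First I would analyse the semisimple part. By the Chevalley property the full subcategory $\mathcal{C}_{\mathrm{ss}}$ of semisimple objects is a symmetric fusion category over $k$. Since $k$ is algebraically closed of characteristic $2$ and the Verlinde category $\mathrm{Ver}_2$ coincides with $\Vect$, the classification of symmetric fusion categories in positive characteristic supplies a symmetric tensor functor $\mathcal{C}_{\mathrm{ss}}\to\mathrm{Ver}_2=\Vect$, forcing $\mathcal{C}_{\mathrm{ss}}$ to be Tannakian: there is a finite group $G_0$ and a symmetric tensor equivalence $\mathcal{C}_{\mathrm{ss}}\simeq\Rep(G_0)$, hence a symmetric fiber functor $F_0\colon\mathcal{C}_{\mathrm{ss}}\to\Vect\hookrightarrow\mathcal{D}$.

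Next I would strip off the Tannakian part by de-equivariantization. As $\mathcal{C}$ is symmetric, $\mathcal{C}_{\mathrm{ss}}=\Rep(G_0)$ is a central Tannakian subcategory, and I would pass to the category $\mathcal{C}_{G_0}$ of $\mathcal{O}(G_0)$-modules in $\mathcal{C}$. It carries a $G_0$-action with equivariantization $(\mathcal{C}_{G_0})^{G_0}\simeq\mathcal{C}$, and since we have de-equivariantized by the whole semisimple part its unique simple object is the unit; thus $\mathcal{C}_{G_0}$ is a finite \emph{local} symmetric tensor category. The problem now splits into constructing a $G_0$-equivariant symmetric fiber functor $\mathcal{C}_{G_0}\to\mathcal{D}$ and then re-equivariantizing to obtain $F$. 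Here the decisive feature is the self-extension of the unit: in characteristic $2$ a non-split extension of $\mathbf 1$ by $\mathbf 1$ need not admit a symmetric $\Vect$-valued realization, whereas the indecomposable object $k^2$ of $\mathcal{D}$ with $\dd=E_{12}$, together with the $R$-matrix $1\ot 1+\dd\ot\dd$, is exactly the universal symmetric home for it. This is precisely where the Frobenius kernel $\alpha_2$ of $\mathbb{G}_a$ enters and why the target must be $\mathcal{D}=\Rep(\alpha_2,R)$ rather than $\Vect$.

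The heart of the argument, and the step I expect to be hardest, is the lifting: extending the fiber functor from the semisimple (equivalently, the associated-graded) level to all of $\mathcal{C}$ while keeping it monoidal and symmetric. I would set this up as a deformation problem along the radical filtration, realizing $\mathcal{C}$ as an iterated square-zero extension of $\mathcal{C}_{\mathrm{ss}}$ and analysing, at each stage, the obstruction to deforming the tensor structure of $F$ and the ambiguity of modifying $F$ by a twist. These obstructions are governed exactly by the invariant cohomology $H^2_{\mathrm{inv}}(A,K)$ of the relevant finite abelian $2$-group $A$ and by the Sweedler cohomology $H^i_{\mathrm{Sw}}(\mathcal{O}(A),K)$; computing these groups — and in particular controlling the genuinely characteristic-$2$ contributions coming from Frobenius-type maps — is what makes the obstructions either vanish or be absorbed by enlarging the target from $\Vect$ to $\mathcal{D}$. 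The final delicate point is to verify that the functor produced is symmetric: in characteristic $2$ the usual sign cancellations degenerate, so compatibility of $F$ with the braiding must be checked directly against the $R$-matrix $1\ot 1+\dd\ot\dd$ rather than inferred from a super-sign.
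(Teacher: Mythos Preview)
Your overall architecture---reduce to a triangular quasi-Hopf algebra $(H,R,\Phi)$ via \cite{eo}, filter by the radical, and kill the obstructions degree by degree---matches the paper's, but two points in your sketch are genuine gaps rather than omissions of detail.

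First, you misidentify the obstruction groups. The computations of $H^2_{\rm inv}(A,K)$ and $H^i_{\rm Sw}(\mathcal{O}(A),K)$ in Section~3 are a \emph{separate} result over non-closed $K$ for finite abelian $p$-groups $A$; they are not used in the proof of Theorem~\ref{classuniptr2}. Over the algebraically closed field $k$ the relevant obstruction to killing the degree-$\ell$ part $\phi$ of $\Phi-1$ lives in the Hochschild group $H^3(\mathcal{O}(\mathcal{G}),k)$ for the full (generally non-abelian) finite group scheme $\mathcal{G}$ with $\gr(H)\cong k\mathcal{G}$. This group is \emph{not} zero: by \cite[Proposition~2.4(2)]{eg5} it contains classes such as $\beta_i\otimes y_j$ and $y_i\wedge y_j\wedge y_l$. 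So the obstructions do not vanish for formal reasons, and no amount of Sweedler or lazy cohomology vanishing will make them.

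Second, and relatedly, you do not supply the mechanism that actually forces $\phi$ to be a coboundary. The paper's point is that the hexagon axioms for the symmetric $R$-matrix impose symmetry constraints on $\phi$ (namely $\Alt(\phi)=0$ and, in the $\dd=0$ case, $\Cyc(\phi)=0$; in the $\dd\neq 0$ case, $\Cyc(\phi)=\Cyc(T\otimes\delta)$ where $T$ records the failure of $\dd$ to be primitive). These constraints, combined with the projection $\pi\colon\Gamma^2 k\mathcal{G}\to k\mathcal{G}^{(1)}$ to the Frobenius twist, kill the coefficients $b_{ijl}$ and $a_{ij}$ in the cohomology-class expansion of $\phi$, leaving only a coboundary $df$ which one then arranges to be symmetric. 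In the $\dd\neq 0$ case there is a further subtlety: the symmetric twist that lowers $\Phi$ disturbs $R$, and one must re-normalize $R$ without undoing the progress on $\Phi$ (Lemma~\ref{last}); this feedback loop between $R$ and $\Phi$ is the genuinely characteristic-$2$ phenomenon, and it is not visible from your description. Your de-equivariantization step is harmless but does not sidestep any of this: after passing to the local category you still face a triangular quasi-Hopf algebra with nontrivial $R$ and $\Phi$, and the same Hochschild analysis is required.
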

%
%

\begin{remark}
Theorem \ref{classuniptr2} answers \cite[Question 1.2]{be} for finite symmetric tensor categories with the Chevalley property over $k$, and we expect it to hold for every finite symmetric integral tensor category over $k$.
\end{remark}

Finally, we note that the arguments used to prove \cite[Theorem 1.1]{eg5} and Theorem \ref{classuniptr2} in fact prove a stronger result (see Theorem \ref{stronger}).

The organization of the paper is as follows. Section 2 is devoted to the proof of Theorem \ref{classuniptr2}. In Section 3 we compute the group $H^2_{\rm inv}(A,K)$ of equivalence classes of twists for the group algebra $K[A]$ of a finite abelian $p$-group $A$ over an arbitrary field $K$ of characteristic $p>0$ (see Theorem \ref{invtwpabconc}), and use it together with  \cite[Proposition 5.7]{eg5} to compute the Sweedler cohomology groups $H^i_{\rm{Sw}}(\mathcal{O}(A),K)$ for every $i\ge 1$ (see Theorem \ref{sweedcoh}).

{\bf Acknowledgments.} P. E. was partially supported by NSF grant DMS 1502244. S. G. is grateful to the University of Michigan and MIT for their warm hospitality.

\section{The proof of Theorem \ref{classuniptr2}}\label{char2}
All constructions in this section are done over an algebraically closed field $k$ of characteristic $2$ unless otherwise is explicitly stated. To lighten notation, we sometimes write $1$ for $1\otimes 1$ or $1\otimes 1\otimes 1$. 

We refer the reader to \cite{egno} for the general theory of finite tensor categories, to \cite{d} for generalities on quasi-Hopf algebras (see also \cite[2.1]{eg5}), and to \cite{j,w} for the general theory of finite group schemes (see also \cite[2.4]{eg5}).

By \cite[Theorem 1.5]{o}, any finite symmetric tensor category with the Chevalley property in characteristic $2$  is integral (as ${\rm Ver}_2=\Vect$). Thus 
by \cite[Theorem 2.6]{eo}, $\mathcal{C}$ is symmetric tensor equivalent to $\Rep(H,R,\Phi)$ for some finite dimensional  triangular quasi-Hopf algebra $(H,R,\Phi)$ with the Chevalley property over $k$. Thus, we have to prove the following theorem.

\begin{theorem}\label{symmwithphimain2}
Let $(H,R,\Phi)$ be a finite dimensional triangular quasi-Hopf algebra with the Chevalley property over $k$. Then $(H,R,\Phi)$ is pseudotwist equivalent to a triangular Hopf algebra with $R$-matrix $1+\dd\ot \dd$ for some $\dd\in P(H)$ such that $\dd^2=0$.
\end{theorem}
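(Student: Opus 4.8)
The plan is to peel off the semisimple part using the Chevalley property and then remove the associator and normalize the $R$-matrix by a sequence of pseudotwists. Since $\mathcal{C}=\Rep(H,R,\Phi)$ has the Chevalley property, the radical $\Rad(H)$ is a quasi-Hopf ideal and $\bar H:=H/\Rad(H)$ is a finite dimensional semisimple triangular quasi-Hopf algebra, so the associated semisimple symmetric tensor category $\bar{\mathcal{C}}=\Rep(\bar H,\bar R,\bar\Phi)$ is a symmetric fusion category over $k$. Because $\mathrm{Ver}_2=\Vect$, there is no super obstruction at $p=2$, and I would argue that $\bar{\mathcal{C}}$ is Tannakian, $\bar{\mathcal{C}}\simeq\Rep(G_0)$ for a finite group $G_0$ of order prime to $2$, with $\bar R$ gauge-trivial. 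A first pseudotwist then lets us assume $\bar\Phi=1$ and $\bar R=1$, i.e. $\Phi\equiv 1$ and $R\equiv 1$ modulo the radical.

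Next I would trivialize the associator $\Phi$ on the nose by a pseudotwist. Filtering by the powers of $\Rad(H)$, the class of $\Phi$ is an obstruction living in a third (Davydov--Yetter/Sweedler-type) cohomology group attached to $\bar{\mathcal{C}}$, and I would kill it by descending induction along the radical filtration: at each stage the obstruction is governed by the semisimple data of $G_0$ together with the current unipotent layer, and the triangular (symmetric) structure forces it to vanish. This reduces the problem to a genuine triangular Hopf algebra $(H,R)$ with the Chevalley property and $R\equiv 1$ modulo the radical.

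It then remains to bring $R$ to the canonical form $1+\dd\ot\dd$. Writing $R=1+r+\cdots$ with $r$ the leading term of $R-1$ in the radical filtration, the triangularity identity $R_{21}R=1$ together with $\chara k=2$ forces $r$ to be symmetric and its two legs to be primitive, so $r$ is a nondegenerate symmetric tensor assembled from elements of $P(H)\cap\Rad(H)$. The goal is to reduce this data, by further pseudotwists, to a single primitive $\dd$ with $\dd^2=0$ and $R=1+\dd\ot\dd$ exactly; in characteristic $2$ such an $R$ is automatically triangular, since $(1+\dd\ot\dd)^2=1+\dd^2\ot\dd^2=1$ and $(1+\dd\ot\dd)_{21}=1+\dd\ot\dd$, which is why the target is the single category $\mathcal{D}$ with one $\dd$.

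The main obstacle is precisely this last reduction. A priori $R$ may involve several primitive directions and higher-order corrections — for instance $1+\dd_1\ot\dd_1+\dd_2\ot\dd_2$ is already triangular in characteristic $2$ — and one must show that every such structure is pseudotwist equivalent to the rank-one model $1+\dd\ot\dd$. This amounts to classifying triangular structures up to gauge, that is, to understanding the group $H^2_{\rm inv}$ of invariant twists for the relevant finite abelian $2$-group, which is exactly the computation carried out in Section~3; its explicit description forces $R$ into the asserted canonical form. Composing the three pseudotwists yields the theorem.
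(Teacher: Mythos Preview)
Your outline has the right shape at the very start (pass to $\gr(H)$, use that $\bar{\mathcal C}$ is Tannakian in characteristic~$2$ to assume $R\equiv 1$ and $\Phi\equiv 1$ modulo the radical), but the two substantive steps are either in the wrong order, unjustified, or based on a misreading of the paper.

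\textbf{Killing $\Phi$.} Your sentence ``the triangular (symmetric) structure forces it to vanish'' is not a proof; it is exactly the content of the theorem. In the paper the leading term $\phi\in Z^3(\mathcal O(\mathcal G),k)[\ell]$ is analyzed via the hexagon relations, and those relations are only usable \emph{after} $R$ has been normalized to $1+\dd\otimes\dd$. In the case $\dd\ne 0$ one does not get $\phi=df$ directly: one first finds $\phi=T\otimes\delta+df$ with $T$ a symmetric $2$-cocycle coming from $\Delta(\dd)-\dd\otimes 1-1\otimes\dd$, and it takes a separate argument (the pentagon equation applied one step further, Proposition~\ref{fourcoc}) to show $T\otimes T$ is a coboundary and hence $\phi$ is. Moreover, the pseudotwist that kills $\phi$ spoils the normalized form of $R$, and a further argument (Lemma~\ref{last}) is needed to restore $R=1+\dd\otimes\dd$ without losing the gain in $\Phi$. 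None of this is visible in your outline, and doing $\Phi$ before $R$ removes the very leverage the hexagon relations provide.

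\textbf{Reducing $R$ to rank one.} This is not governed by $H^2_{\rm inv}$, and Section~3 is not used in the proof of Theorem~\ref{symmwithphimain2} at all (over algebraically closed $k$ one has $H^2_{\rm inv}(A,k)=0$ anyway). The reduction is pure characteristic-$2$ linear algebra: the leading symmetric part $s\in\Gamma^2\gr(H)$ of $R-1$ has a canonical image $v=\pi(s)\in\gr(H)^{(1)}$, and $s-v\otimes v\in\wedge^2\gr(H)$ can be removed by a twist $J$ (since twisting changes $R$ by $J_{21}^{-1}J$). Your example $1+\dd_1\otimes\dd_1+\dd_2\otimes\dd_2$ already illustrates this: in characteristic~$2$,
\[
\dd_1\otimes\dd_1+\dd_2\otimes\dd_2=(\dd_1+\dd_2)\otimes(\dd_1+\dd_2)+\dd_1\otimes\dd_2+\dd_2\otimes\dd_1,
\]
and the last two terms are twisted away, so no classification result is needed. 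This is Proposition~\ref{trivR}, and it comes \emph{first} in the paper precisely so that the hexagon relations with $R=1+\dd\otimes\dd$ can then be used to attack $\Phi$.
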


We will prove Theorem \ref{symmwithphimain2} in several steps.

\subsection{$\gr(H)$}\label{2.1} Let $(H,R,\Phi)$ be a finite dimensional triangular quasi-Hopf algebra with the Chevalley property over $k$. Let $I:=\Rad(H)$ be the Jacobson radical of $H$. Since $I$ is a quasi-Hopf ideal of $H$, the associated graded algebra $\gr(H)=\bigoplus_{r\ge 0} H[r]$ has a natural structure of a graded triangular quasi-Hopf algebra with some $R$-matrix $R_0\in H[0]^{\otimes 2}$ and associator $\Phi_0\in H[0]^{\otimes 3}$ (see, e.g., \cite[2.2]{eg5}).

\begin{proposition} \cite[Proposition 3.2]{eg5}\label{main1}
The following hold:
\begin{enumerate}
\item
$H[0]$ is semisimple.
\item
$(H[0],R_0,\Phi_0)$ is a triangular quasi-Hopf subalgebra of $(\gr(H),R_0,\Phi_0)$.
\item
$\Rep(H[0],R_0,\Phi_0)$ is symmetric tensor equivalent to $\Rep(G)$ for some finite semisimple group scheme $G$ over $k$.
\item
$(\gr(H),R_0,\Phi_0)$ is pseudotwist equivalent to a graded triangular Hopf algebra with $R$-matrix $1\ot 1$, whose degree $0$-component is $(kG,1\ot 1)$. \qed
\end{enumerate}
\end{proposition}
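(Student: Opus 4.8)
The plan is to establish the four assertions in order, the first two being formal and the last two carrying the real content. Assertion (1) is immediate: $H[0]=H/\Rad(H)$ is the quotient of a finite dimensional algebra by its Jacobson radical and is therefore semisimple by definition. For (2), I would extract the degree-$0$ parts of the graded structure: the pentagon, hexagon, and triangularity identities for $(\gr(H),R_0,\Phi_0)$ are relations among homogeneous tensors, and projecting each tensor factor onto its degree-$0$ component shows that $R_0\in H[0]^{\ot 2}$ and $\Phi_0\in H[0]^{\ot 3}$ satisfy the same axioms. Hence $(H[0],R_0,\Phi_0)$ is a triangular quasi-Hopf algebra and the inclusion of the degree-$0$ component $H[0]\hookrightarrow \gr(H)$ is a morphism of triangular quasi-Hopf algebras.

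For (3), note that $\Rep(H[0],R_0,\Phi_0)$ is by (1) a finite \emph{semisimple} symmetric tensor category over $k$, and it is integral since it arises from a (quasi-)Hopf algebra. The key input is the characteristic-$p$ analog of Deligne's theorem in the degenerate case: because ${\rm Ver}_2=\Vect$ in characteristic $2$, there is no exotic symmetric subcategory obstructing a symmetric fiber functor, so such a category admits a symmetric fiber functor to $\Vect$. Tannakian reconstruction then yields a finite group scheme $G$ with $\Rep(H[0],R_0,\Phi_0)\simeq \Rep(G)$ as symmetric tensor categories, and semisimplicity of the category forces $G$ to be linearly reductive, i.e.\ a finite semisimple group scheme. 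This gives (3).

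Finally, for (4), I would realize the equivalence of (3) at the level of algebras as a pseudotwist supported in degree $0$: a gauge element $J_0\in H[0]^{\ot 2}$ together with the accompanying degree-$0$ data that simultaneously trivializes $\Phi_0$ to $1$ and brings $R_0$ to $1\ot 1$, while identifying $(H[0],R_0,\Phi_0)$ with $(kG,1\ot 1)$. Since these gauge elements lie in degree $0$ and twisting and conjugation by degree-$0$ elements preserve the grading, the same pseudotwist can be applied to all of $\gr(H)$; the result is a graded triangular quasi-Hopf algebra, pseudotwist equivalent to $(\gr(H),R_0,\Phi_0)$, whose associator is trivial---hence a genuine graded Hopf algebra---with $R$-matrix $1\ot 1$ and degree-$0$ component $(kG,1\ot 1)$. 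I expect the main obstacle to be twofold: first, the Deligne-type classification of semisimple symmetric tensor categories underlying (3); and second, verifying in (4) that the degree-$0$ pseudotwist propagates consistently along the entire grading, so that all graded quasi-Hopf relations are preserved and the associator genuinely trivializes. This grading-compatibility of the pseudotwist is the delicate point.
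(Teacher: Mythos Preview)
The paper does not actually supply a proof of this proposition: it is quoted verbatim from \cite[Proposition 3.2]{eg5} and closed with a \qed. So there is no ``paper's own proof'' to compare against beyond the citation.

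Your sketch is correct and matches what one expects the argument in \cite{eg5} to be. Parts (1) and (2) are indeed formal. For (3), your invocation of Ostrik's theorem (that a symmetric fusion category in characteristic $p$ fibers over ${\rm Ver}_p$, and ${\rm Ver}_2=\Vect$) is exactly the right input; this is precisely the mechanism the paper alludes to earlier when it cites \cite[Theorem 1.1]{o}. For (4), your idea of realizing the equivalence from (3) as a pseudotwist $J_0\in H[0]^{\otimes 2}$ and then applying it to all of $\gr(H)$ is the standard reconstruction argument.

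One comment: you flag the ``grading-compatibility of the pseudotwist'' in (4) as a delicate point, but it is not. The pseudotwist $J_0$ lives in $H[0]^{\otimes 2}\subset \gr(H)^{\otimes 2}$, and twisting by a degree-$0$ element is a purely formal operation on the graded quasi-Hopf structure; it automatically preserves the grading, and the pentagon/hexagon identities transform globally exactly as they do in degree $0$. There is nothing to check beyond the degree-$0$ statement you already have from (3). The genuine content is entirely in (3), and you have identified the correct tool for it.
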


\begin{corollary} \cite[Corollary 3.3]{eg5}\label{gr}
Let $(H,R,\Phi)$ be a finite dimensional triangular quasi-Hopf algebra with the Chevalley property over $k$. 
Then $\gr(H)$ is pseudotwist equivalent to $k\mathcal{G}$ for some finite group scheme $\mathcal{G}$ over $k$ containing $G$ as a closed subgroup scheme. \qed
\end{corollary}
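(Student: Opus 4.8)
The plan is to read this off directly from Proposition \ref{main1}, whose part (4) already carries out the essential work. First I would apply Proposition \ref{main1}(4) to produce a graded triangular Hopf algebra $\widetilde{H}$ that is pseudotwist equivalent to $(\gr(H),R_0,\Phi_0)$, has trivial $R$-matrix $1\ot 1$, trivial associator, and degree-$0$ component equal to $(kG,1\ot 1)$. Since pseudotwist equivalence is an equivalence relation, it then suffices to identify $\widetilde{H}$ with $k\mathcal{G}$ for an appropriate finite group scheme $\mathcal{G}$ and to exhibit $G$ as a closed subgroup scheme of $\mathcal{G}$.

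The key observation is that a trivial $R$-matrix forces cocommutativity: the quasi-triangularity axiom $\Delta^{\op}(h)=R\,\Delta(h)\,R^{-1}$ reduces, for $R=1\ot 1$, to $\Delta^{\op}=\Delta$. Hence $\widetilde{H}$ is a finite dimensional cocommutative Hopf algebra over the algebraically closed field $k$. Next I would invoke the standard antiequivalence (Cartier duality) between finite group schemes over $k$ and finite dimensional commutative Hopf algebras: dualizing, $\widetilde{H}^{*}$ is a finite dimensional commutative Hopf algebra, hence of the form $\mathcal{O}(\mathcal{G})$ for a unique finite group scheme $\mathcal{G}$ over $k$, and dualizing back gives $\widetilde{H}\cong k\mathcal{G}$. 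Combined with the previous step, this shows that $\gr(H)$ is pseudotwist equivalent to $k\mathcal{G}$, as required.

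Finally, to realize $G$ inside $\mathcal{G}$ I would use that the degree-$0$ component $kG$ is a Hopf subalgebra of $\widetilde{H}=k\mathcal{G}$. Dualizing the inclusion $kG\hookrightarrow k\mathcal{G}$ yields a surjective Hopf algebra map $\mathcal{O}(\mathcal{G})\twoheadrightarrow\mathcal{O}(G)$, which is precisely the statement that $G$ is a closed subgroup scheme of $\mathcal{G}$.

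The genuine difficulty of the result is entirely contained in Proposition \ref{main1}; given it, the corollary is a dictionary translation, and the only point requiring a little care is checking that the pseudotwist produced in part (4) respects the grading, so that its degree-$0$ part is literally $kG$ and hence survives as an honest Hopf subalgebra of $k\mathcal{G}$. Beyond that, the sole nontrivial input is the classical correspondence between finite group schemes and finite dimensional commutative Hopf algebras, which is standard.
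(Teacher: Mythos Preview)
Your proposal is correct and matches the paper's approach exactly: the paper regards the corollary as an immediate consequence of Proposition~\ref{main1}(4) (hence the \qed\ with no further argument), and you have simply written out that immediate deduction---trivial $R$-matrix forces cocommutativity, a finite dimensional cocommutative Hopf algebra over $k$ is $k\mathcal{G}$ for a finite group scheme $\mathcal{G}$, and the Hopf subalgebra $kG$ in degree~$0$ dualizes to a quotient $\mathcal{O}(\mathcal{G})\twoheadrightarrow\mathcal{O}(G)$.
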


\begin{remark}\label{epsilon=1}
By Nagata's theorem (see, e.g, \cite[p.223]{a}), we have $G=\Gamma\ltimes P^D$, where $\Gamma$ is a finite group of odd order and $P$ is a finite abelian $2$-group. Hence, we have $kG=k\Gamma\ltimes \mathcal{O}(P)$.
\end{remark}

Let $\Gamma:=\mathcal{G}/\mathcal{G}^{\circ}$. Then $\Gamma$ is a finite constant group of odd order, and we have $\mathcal{G}=\Gamma\ltimes \mathcal{G}^{\circ}$. Thus, we have $\mathcal{O}(\mathcal{G})=\mathcal{O}(\Gamma)\otimes \mathcal{O}(\mathcal{G}^{\circ})$ as algebras.

By the results of this subsection, we may assume without loss of generality in the proof of Theorem \ref{symmwithphimain2} that $R=1+$ terms of higher degree.

\subsection{Trivializing $R$}\label{2.2}

Let $V$ be a $k$-vector space, and let $\tau:V^{\ot 2}\to V^{\ot 2}$ be the flip map. Recall that 
$$\wedge^2V:=\text{Im}(\id +\tau)\subset \Gamma^2V:=\text{Ker}(\id +\tau)\subset V^{\ot 2},$$
$$S^2V:=V^{\ot 2}/\wedge^2V,\,\,\,\,\,V^{(1)}:=\Gamma^2V/\wedge^2V,$$
and that $V^{(1)}$ is called the Frobenius twist of $V$ and $\Gamma^2V$ the divided second symmetric power of $V$. Note that $V^{(1)}$ is the image of the composition
$$\Gamma^2V\hookrightarrow V^{\ot 2}\twoheadrightarrow S^2V.$$
Let $\pi: \Gamma^2V\to V^{(1)}$ be the natural surjective map. 

Let $(H,R,\Phi)$ be as in the end of Section \ref{2.1}.

\begin{proposition}\label{trivR} 
The following hold:
\begin{enumerate}
\item
Suppose $R=1+\dd_{n-1}\ot \dd_{n-1}$ modulo terms of degree $\ge n\ge 1$ such that $\dd_{n-1}\in \Rad(H)$. 
Then $(H,R,\Phi)$ can be twisted to a form such that $R=1+\dd_n\otimes \dd_n$ modulo terms of degree $\ge n+1$, where $\dd_{n}\in \Rad(H)$ and $\dd_n-\dd_{n-1}$ has degree $\ge n/2$, by a pseudotwist $J_n$ such that $J_n-1$ has degree $\ge n$ if $\dd_{n-1}=0$, and degree $\ge \frac{n}{2}+p$ if ${\rm deg}(\dd_{n-1})=p>0$. 
\item
If $R\ne 1$ then $(H,R,\Phi)$ can be twisted to the form $R=1+\dd\ot \dd$, where $\dd\in \Rad(H)$ is an element of positive degree. Moreover, if $R=1+\dd'\ot \dd'$ modulo terms of degree $\ge n$, where   
$\dd'\in \Rad(H)$,
then this can be achieved by a pseudotwist $J$ with $J-1$ of degree $\ge n$ if $\dd'=0$, and degree  
$\ge \frac{n}{2}+p$ if $\dd'\ne 0$ and has degree $p$, so that $\dd-\dd'$ has degree $\ge n/2$. 
\item
If $R=1+\dd\ot \dd$ then $\dd^2=0$.
\end{enumerate} 
\end{proposition}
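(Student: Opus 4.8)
The plan is to dispatch the three parts in the order (3), (1), (2): part (3) is a one-line computation, part (1) is the essential order-by-order deformation step, and part (2) is its iteration. For (3), note that $\dd\ot\dd$ is symmetric, so $R_{21}=R$ and the triangularity axiom $R_{21}R=1$ becomes $(1+\dd\ot\dd)^2=1$. Expanding in characteristic $2$, the cross term $2(\dd\ot\dd)$ vanishes and we are left with $1+\dd^2\ot\dd^2=1$, forcing $\dd^2\ot\dd^2=0$ and hence $\dd^2=0$. (Invertibility of $R$ is automatic since $\dd\in\Rad(H)$ makes $\dd\ot\dd$ nilpotent.)

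For (1) I would work with the leading symbols in $\gr(H)$, which by Corollary \ref{gr} is pseudotwist equivalent to $k\mathcal{G}$ for a finite group scheme $\mathcal{G}$; after the reduction of Section \ref{2.1} the leading $R$-matrix and associator are trivial. Write $R=1+\dd_{n-1}\ot\dd_{n-1}+\rho$ modulo degree $\ge n+1$, with $\rho$ its degree-$n$ component, and extract two constraints. Triangularity $R_{21}R=1$ controls the symmetric part: the degree-$n$ part of $R_{21}R-1$ is $\rho+\rho_{21}$ up to the contribution of $(\dd_{n-1}\ot\dd_{n-1})^2=\dd_{n-1}^2\ot\dd_{n-1}^2$, so up to this known (itself square) term $\rho$ lies in $\Gamma^2$. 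The quasitriangularity hexagons, with trivial leading associator, degenerate to $(\Delta\ot\id)(\rho)=\rho_{13}+\rho_{23}$ and its mirror, which say that both legs of $\rho$ are primitive; thus the symmetric part of $\rho$ lands in $\Gamma^2P$, where $P=P(k\mathcal{G})$.

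The rank-one conclusion is exactly what the functors $\wedge^2,\Gamma^2,S^2$ and the Frobenius twist $V^{(1)}=\Gamma^2V/\wedge^2V$ were set up to deliver. A pseudotwist $J_n=1+j$ acts by $R\mapsto (J_n)_{21}RJ_n^{-1}$, altering the relevant part of $\rho$ by the symmetrization $j+j_{21}\in\wedge^2$; hence we may work modulo $\wedge^2P$, i.e.\ in $P^{(1)}$. Since the cross terms $v\ot w+w\ot v$ lie in $\wedge^2P$, the map $\pi\colon\Gamma^2P\to P^{(1)}$ is additive on squares and Frobenius-semilinear, so any class $\sum_i\pi(\delta_i\ot\delta_i)$ equals $\pi\bigl((\sum_i\delta_i)\ot(\sum_i\delta_i)\bigr)$. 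Therefore the symmetric primitive part of $\rho$ is, modulo $\wedge^2P$, a single square $\delta\ot\delta$ with $\delta=\sum_i\delta_i\in P$. Setting $\dd_n:=\dd_{n-1}+\delta$ gives the new square $\delta\ot\delta$ at degree $n$, while the same twist $J_n$ is chosen to generate the cross terms $\dd_{n-1}\ot\delta+\delta\ot\dd_{n-1}$ needed to complete $\dd_n\ot\dd_n$; tracking internal degrees (the leading piece of $j$ is $\dd_{n-1}\ot\delta$ of degree $n/2+p$ when $\deg\dd_{n-1}=p>0$, and is of degree $n$ when $\dd_{n-1}=0$) yields the asserted bounds on $J_n-1$ and on $\dd_n-\dd_{n-1}$.

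Part (2) then follows by iterating (1) from $\dd_0=0$: since $H$ is finite dimensional its radical filtration is finite, so only finitely many steps are nontrivial, the elements $\dd_n$ stabilize to some $\dd$, and $R=1+\dd\ot\dd$ exactly; the composite twist $J=\cdots J_{n+1}J_n$ is a finite product whose leading degree is that of its first nontrivial factor, giving the stated bound on $J-1$. I expect the main obstacle to be the careful derivation of the two constraints on $\rho$ in the presence of a nonzero lower term $\dd_{n-1}\ot\dd_{n-1}$ and of the higher-degree associator: separating the degree-$n$ part of $R_{21}R=1$ from the $\dd_{n-1}^2\ot\dd_{n-1}^2$ contribution (which behaves differently according as $4\deg\dd_{n-1}$ is less than, equal to, or greater than $n$) and matching the twist degree to $n/2+p$. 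The conceptual step, passing to $\Gamma^2P$ and using Frobenius additivity to collapse a sum of squares to a single square, is clean; the degree bookkeeping is the delicate part.
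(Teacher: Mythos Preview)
Your overall architecture—use $R_{21}R=1$ to force the degree-$n$ piece $s$ into $\Gamma^2$, kill its $\wedge^2$-part by a pseudotwist, collapse what remains to a single square via $\pi:\Gamma^2V\to V^{(1)}$, and then add the cross terms $\dd_{n-1}\otimes\delta+\delta\otimes\dd_{n-1}$ by a further twist $1+\dd_{n-1}\otimes\tilde v$—is exactly the paper's approach, and your parts (2) and (3) are fine.

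The one genuine gap is your invocation of the hexagon axioms to conclude that the tensorands of $\rho$ are primitive. At this stage of the argument $\Phi$ has not been touched: we only know $\Phi-1$ has degree $\ge 1$, not $\ge n$. Hence the degree-$n$ component of, say, $(\Delta\otimes\id)(R)=\Phi_{312}R_{13}\Phi_{132}^{-1}R_{23}\Phi$ picks up contributions from $\Phi$ (and from products of lower-degree pieces of $R$ with pieces of $\Phi$), and does \emph{not} reduce to $(\Delta\otimes\id)(\rho)=\rho_{13}+\rho_{23}$. So the primitivity claim is unjustified. Fortunately it is also unnecessary: the paper never uses the hexagons here. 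The identity $\Gamma^2V/\wedge^2V\cong V^{(1)}$ holds for $V=\gr(H)$ itself, so once $s$ is symmetric one can twist it to a single square $v\otimes v$ with $v\in\gr(H)[n/2]$, with no need to know that $v$ is primitive. Simply drop the hexagon step and work in $\Gamma^2\gr(H)$ rather than $\Gamma^2P$.

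One smaller point of bookkeeping that you flag as ``delicate'' is handled cleanly in the paper and is worth absorbing: from $R_{21}R=1$ modulo degree $\ge n$ one gets $\dd_{n-1}^2\otimes\dd_{n-1}^2\equiv 0$ modulo degree $\ge n$, i.e.\ $\deg(\dd_{n-1}^2)\ge n/2$. This is what makes $s$ genuinely symmetric (the would-be correction $\dd_{n-1}^2\otimes\dd_{n-1}^2$ at degree $n$ would have to lie in $\wedge^2$, forcing it to vanish), and it is also precisely what ensures that twisting by $J=1+\dd_{n-1}\otimes\tilde v$ introduces no unwanted terms below degree $n+1$, since $\deg(\dd_{n-1}^2\otimes\dd_{n-1}\tilde v)\ge n/2+n/2+p\ge n+1$.
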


\begin{proof} 
(1) Let $R=1+\dd_{n-1}\ot \dd_{n-1}$ modulo terms of degree $\ge n$, and consider $R$ modulo terms of degree $\ge n+1$. We have $R=1+\dd_{n-1}\ot \dd_{n-1}+\widetilde{s}$ modulo terms of degree $\ge n+1$, where $\widetilde{s}\in H^{\otimes 2}$ has degree $\ge n$. Let $s\in \gr(H)^{\ot 2}[n]$ be the leading part of $\widetilde{s}$. Then $s$ is symmetric because $R_{21}R=1\otimes 1$, so $s\in \Gamma^2\gr(H)[n]$. Moreover, if $t\in \wedge^2\gr(H)[n]$ then we can replace $s$ by $s+t$ by twisting. 

Let $v:=\pi(s)$ be the image of $s$ in $\gr(H)^{(1)}[n]=\gr(H)[n/2]^{(1)}$ (note that this space can be nonzero only if $n$ is even). Then we can twist $s$ into the form $v\ot v$ by a pseudotwist $J$ with $J-1$ of degree $\ge n$. So we will get $R=1+\dd_{n-1}\ot \dd_{n-1}+\widetilde{v}\ot \widetilde{v}$ modulo terms of degree $\ge n+1$, where $\widetilde{v}$ is a lift of $v$ to $H$. 
If $\dd_{n-1}=0$, this completes the proof (we can set $\dd_n=\widetilde{v}$). Thus, it remains to consider the case when $\dd_{n-1}\ne 0$ and has degree $p$; so we may assume that $n>2p$ (because for $n\le 2p$, we can set $\dd_n=\dd_{n-1}$ and $J=1$). In this case, let us twist by $J=1+\dd_{n-1}\ot \widetilde{v}$ (note that $\deg(J-1)\ge \frac{n}{2}+p$). Since $R_{21}R=1\ot 1$, we have $\text{deg}(\dd_{n-1}^2)\ge n/2$, hence 
$$
\text{deg}(\dd_{n-1}^2\ot \dd_{n-1}\widetilde{v})\ge n/2+n/2+p=n+p\ge n+1,
$$ 
so twisting by $J$ brings $R$ to the form $R=1+(\dd_{n-1}+\widetilde{v})\ot (\dd_{n-1}+\widetilde{v})$ modulo terms of degree $\ge n+1$, i.e., we may take $\dd_n=\dd_{n-1}+\widetilde{v}$, as desired. 

(2) Follows immediately from (1). Namely, for the first statement we take $\dd$ to be the stable limit of the $\dd_m$'s and $J$ to be the product of the $J_m$'s, and for the second statement we take $\dd_{n-1}=\dd'$, $\dd$ to be the stable limit of the $\dd_m$'s, and $J$ to be the product of the $J_m$'s for $m\ge n$. 

(3) Follows from the identity $R_{21}R=1\otimes 1$. 
\end{proof}

Thus, from now on we may assume that $R=1+\dd\ot \dd$ for some $\dd\in\Rad(H)$ with $\dd^2=0$ (but in general $\dd$ is not a primitive element yet, as we have not made $\Phi=1$).

\begin{remark}\label{stable}
Proposition \ref{trivR} implies that the degree $p$ of $\dd$ in Proposition \ref{trivR}(2) and its degree $p$ part $\delta\in\gr(H)[p]$ (when $\dd\ne 0$) are uniquely determined. Indeed, if 
$(H,R,\Phi)$ is pseudotwist equivalent to $(H,R',\Phi')$ where 
$R=1+\dd\ot \dd$ and $R'=1+\dd'\ot \dd'$ modulo terms of degree $\ge n$, 
and if $\dd\ne 0$ and has degree $p<n/2$, then by Proposition \ref{trivR}(1)
the pseudotwist $J$ can be chosen so that $J-1$ is of degree \linebreak $\ge \frac{n}{2}+p>2p$, so 
$\dd'-\dd$ has degree $\ge p+1$, as desired. In particular, if $R=J_{21}^{-1}J$ then whenever $R$ is twisted to $1+ \dd\ot \dd$, we must have $\dd=0$. This is the case when $\Rep(H,R,\Phi)$ is Tannakian (as follows from Theorem \ref{symmwithphimain2}). However, $\dd$ itself is not unique (e.g., it can be conjugated by an invertible element $x$ of $1+{\rm Rad}(H)$, which results from applying the coboundary twist attached to $x$).
\end{remark}

\subsection{Trivializing $\Phi$} 
Let $(H,R,\Phi)$ be a finite dimensional triangular quasi-Hopf algebra with the Chevalley property over $k$, where $R=1+\dd\otimes \dd$ for some element $\dd\in\Rad(H)$ with $\dd^2=0$. 

By Corollary \ref{gr}, $\gr(H)=k[\mathcal{G}]=\bigoplus_{i\ge 0} k[\mathcal{G}][i]$, as graded Hopf algebras, for some finite group scheme $\mathcal{G}$ over $k$. We let $m$, $\varepsilon$ denote the multiplication and counit maps of $\mathcal{O}(\mathcal{G})$.

If $\Phi=1$ then $\dd^2=0$ and $\Delta(\dd)=\dd\ot 1+1\ot \dd$, so we are done. Thus we may assume that $\Phi\ne 1$. Consider $\Phi-1$. If it has degree $\ell$ then let $\phi$ be its projection to $\gr(H)^{\ot 3}[\ell]$. 

For every permutation $(i_1i_2i_3)$ of $(123)$, we will use $\phi_{i_1i_2i_3}$ to denote the $3$-tensor obtained by permuting the components of $\phi$ accordingly.

\begin{lemma}\label{qtalt} 
The following hold:

\begin{enumerate}
\item 
$\phi\in Z^3(\mathcal{O}(\mathcal{G}),k)$ is a normalized Hochschild $3$-cocycle of $\mathcal{O}(\mathcal{G})$ with coefficients in the trivial module $k$, i.e.,  
$$\phi\circ ({\rm id}\ot {\rm id}\ot m)+\phi\circ (m\ot {\rm id}\ot {\rm id})= \varepsilon\ot \phi +\phi\circ ({\rm id}\ot m\ot {\rm id})+ \phi\ot \varepsilon$$
and
$$
\phi\circ ({\rm id}\ot {\rm id}\ot 1)=\phi\circ (1\ot {\rm id}\ot {\rm id})=\phi\circ ({\rm id}\ot 1\ot {\rm id})=\varepsilon \ot \varepsilon.$$ 
\item 
$\Alt(\phi):=\phi_{312}+\phi_{132}+\phi_{123} +\phi_{231}+\phi_{213}+\phi_{321}=0$.
\end{enumerate}
\end{lemma}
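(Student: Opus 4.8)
The plan is to read off both assertions from the coherence axioms of the quasi-triangular quasi-Hopf structure by extracting, in each case, the lowest-degree (degree $\ell$) component. The essential device is the duality dictionary: since $\gr(H)=k[\mathcal{G}]$ as Hopf algebras and $\mathcal{O}(\mathcal{G})$ is finite dimensional, there is a canonical identification $\gr(H)^{\ot 3}=\Hom(\mathcal{O}(\mathcal{G})^{\ot 3},k)$, under which $\phi$ becomes a Hochschild $3$-cochain of $\mathcal{O}(\mathcal{G})$ with trivial coefficients. In this dictionary the coproduct $\Delta$ of $k[\mathcal{G}]$ is dual to the multiplication $m$ of $\mathcal{O}(\mathcal{G})$, the unit of $k[\mathcal{G}]$ is dual to $\varepsilon$, and the counit of $k[\mathcal{G}]$ is dual to the unit of $\mathcal{O}(\mathcal{G})$; this is exactly what converts the Hopf-algebraic identities into the Hochschild language of the lemma.

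For part (1) I would expand the pentagon identity for $\Phi$, writing $\Phi=1+\phi+(\text{terms of degree}>\ell)$. Every monomial in the pentagon that is a product of two or more of the corrections has degree $\ge 2\ell>\ell$ and so drops out, so the degree-$\ell$ component of the pentagon is the linear identity
$$1\ot\phi+(\id\ot\Delta\ot\id)(\phi)+\phi\ot 1=(\id\ot\id\ot\Delta)(\phi)+(\Delta\ot\id\ot\id)(\phi).$$
Dualizing via the dictionary above turns this into precisely the stated $3$-cocycle equation (in characteristic $2$ the signs are irrelevant). The normalization conditions are obtained in the same way from the counit axioms $(\id\ot\varepsilon\ot\id)(\Phi)=1\ot 1$ and its two companions, whose degree-$\ell$ parts dualize to $\phi\circ(\id\ot 1\ot\id)=\varepsilon\ot\varepsilon$ and the two analogous equalities.

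For part (2) I would feed $\Phi=1+\phi+\cdots$ and $R=1+\dd\ot\dd+\cdots$ into the two hexagon axioms relating $R$ and $\Phi$. Since $\dd\in\Rad(H)$, the correction $\dd\ot\dd$ has positive degree, so at degree $\ell$ each hexagon becomes a linear relation purely among permutations of $\phi$; concretely the two hexagons give, in characteristic $2$,
$$\phi_{312}+\phi_{132}+\phi_{123}=0\quad\text{and}\quad\phi_{231}+\phi_{213}+\phi_{123}=0.$$
Adding these cancels the two copies of $\phi_{123}$ and accounts for four of the six terms of $\Alt(\phi)$, leaving precisely $\phi_{123}+\phi_{321}$. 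The total-reversal permutation $\phi_{321}$ is exactly the one that never appears in either hexagon, since a single hexagon only moves one object past a product. To capture it I would invoke the symmetry of the structure, $R_{21}R=1$ with $R=R_{21}$: the coherence of the order-reversing isomorphism in a symmetric category yields $\Phi_{321}\equiv\Phi^{-1}$ modulo higher degree, and because $R\equiv 1$ to leading order the $R$-corrections in the reversal do not contribute at degree $\ell$. This gives $\phi_{321}=\phi_{123}$, so $\phi_{123}+\phi_{321}=0$ and hence $\Alt(\phi)=0$.

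The routine parts are the two leading-order expansions; the main obstacle is the degree bookkeeping together with an honest derivation of the reversal relation. Specifically, I must ensure that the comparison between the degree $\ell$ of $\Phi-1$ and the degree $2\,{\rm deg}(\dd)$ of $R-1$ does not let $R$-terms intrude on the degree-$\ell$ part of the hexagons (this is immediate when $\ell$ is strictly smaller but needs checking in general), and I must pin down the precise quasi-Hopf form of the symmetric-reversal identity $\Phi_{321}\equiv\Phi^{-1}$ so that its degree-$\ell$ component really is $\phi_{321}+\phi_{123}=0$. Once these points are settled, the conclusion is immediate in characteristic $2$.
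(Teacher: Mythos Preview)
Your treatment of part (1) is correct and matches the paper: both extract the degree-$\ell$ part of the pentagon and counit axioms.

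For part (2) there is a genuine gap, and it is precisely at the two points you yourself flag as needing care. Your claim that at degree $\ell$ the two hexagons yield the \emph{clean} relations $\phi_{312}+\phi_{132}+\phi_{123}=0$ and $\phi_{231}+\phi_{213}+\phi_{123}=0$ is false in general: when $\dd\ne 0$ the term $\Delta(\dd)\ot\dd$ on the left of the hexagon contributes at degree $\ell$, and the paper later (Lemma~\ref{qtalt2a}) shows that what one actually obtains is
\[
T\ot\delta+\phi_{312}+\phi_{132}+\phi_{123}=0,\qquad
\delta\ot T+\phi_{231}+\phi_{213}+\phi_{123}=0,
\]
where $T$ is the degree-$(\ell-p)$ part of $\Delta(\dd)-\dd\ot 1-1\ot\dd$, and $T$ need not vanish. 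Likewise your ``reversal identity'' $\Phi_{321}\equiv\Phi^{-1}$ is not a primitive axiom of a triangular quasi-Hopf algebra; it has to be manufactured from the hexagons together with $R_{21}R=1$, and once you do that honestly the $R$-corrections reappear: one gets $\phi_{123}+\phi_{321}=T\ot\delta+\delta\ot T$, not $0$ (cf.\ Lemma~\ref{qtalt2a}(3)(c)). So neither obstacle you list can be ``checked'' away; the corrections are really there.

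The paper avoids this entirely by citing a single identity, \cite[(2.8)]{eg5}, the quasi-Yang--Baxter relation, in which each of $R_{12},R_{13},R_{23}$ occurs once on each side while all six permutations of $\Phi$ appear. At degree $\ell$ the $R$-contributions therefore cancel between the two sides regardless of the relation between $\ell$ and $\deg(\dd)$, and what remains is exactly $\phi_{312}+\phi_{132}+\phi_{123}=\phi_{321}+\phi_{231}+\phi_{213}$, i.e.\ $\Alt(\phi)=0$. Note also that in the paper's logic the order is the reverse of yours: $\Alt(\phi)=0$ is established first, and only afterwards (Lemma~\ref{qtalt1}) is it combined with the hexagon relations (which \emph{are} clean in the special case $R=1$) to deduce $\phi_{123}=\phi_{321}$.
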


\begin{proof}
(1) Follows from \cite[(2.1)-(2.2)]{eg5} in a straightforward manner.

(2) Follows from \cite[(2.8)]{eg5} in a straightforward manner.
\end{proof}

\subsubsection{The case $R=1\ot 1$.}
In this subsection we will assume that $R=1\ot 1$, i.e., $\dd=0$. 

\begin{lemma}\label{qtalt1} 
The following hold:
\begin{enumerate}
\item
$\phi_{312}+\phi_{132}+\phi_{123}=0=\phi_{231}+\phi_{213}+\phi_{123}$.
\item
$\phi_{123}=\phi_{321}$.
\item
$\Cyc(\phi):=\phi_{312}+\phi_{231}+\phi_{123}=0$.
\end{enumerate}
\end{lemma}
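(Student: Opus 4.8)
The plan is to pass to the leading term $\phi\in\gr(H)^{\ot 3}[\ell]$ and to linearize the quasitriangular quasi-Hopf axioms in degree $\ell$, turning them into \emph{linear} relations among the six permutations $\phi_{i_1i_2i_3}$. Writing $\Phi=1+\phi+(\text{higher degree})$ and setting $R=1\ot 1$, both hexagon axioms (in the standard normalization of \cite{d}) degenerate into products of permutations of $\Phi$ alone, namely $\Phi_{312}\Phi_{132}^{-1}\Phi_{123}=1$ and $\Phi_{231}^{-1}\Phi_{213}\Phi_{123}^{-1}=1$; every suppressed factor is either $1$ (degree $0$) or of degree $>\ell$, so the degree-$\ell$ parts can be read off term by term. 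Throughout, characteristic $2$ erases all signs, so that $\Phi^{-1}=1-\phi+\cdots$ contributes $+\phi$ in degree $\ell$.

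For (1) I would take the degree-$\ell$ component of the two hexagon identities above, which give respectively $\phi_{312}+\phi_{132}+\phi_{123}=0$ and $\phi_{231}+\phi_{213}+\phi_{123}=0$. For (2) I would add these two relations; as $2\phi_{123}=0$ this yields $\phi_{312}+\phi_{132}+\phi_{231}+\phi_{213}=0$, and substituting into $\Alt(\phi)=0$ from Lemma~\ref{qtalt}(2) collapses it to $\phi_{123}+\phi_{321}=0$, i.e. $\phi_{123}=\phi_{321}$.

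Part (3) is the crux. Using (1) to replace $\phi_{312}$ by $\phi_{132}+\phi_{123}$ reduces the claim $\Cyc(\phi)=\phi_{312}+\phi_{231}+\phi_{123}=0$ to the single symmetry
$$\phi_{132}=\phi_{231}.$$
I expect this to be the main obstacle, for a structural reason: relations (1), (2) and $\Alt(\phi)=0$ amount to only three independent linear equations in the six unknowns $\phi_{i_1i_2i_3}$, and one checks that they leave the cyclic sum undetermined (they admit solutions with $\phi_{132}\ne\phi_{231}$). Thus (3) cannot be formal and must invoke the one remaining axiom, the normalized Hochschild $3$-cocycle condition of Lemma~\ref{qtalt}(1) — the leading-order shadow of the pentagon. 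The plan is to combine this cocycle identity with the hexagon relations of (1): substituting the hexagon expressions for the relevant permutations of $\phi$ and exploiting the normalization conditions $\phi\circ(\id\ot 1\ot\id)=\eps\ot\eps$ to discard the degenerate contributions, one should be left with $\phi_{132}=\phi_{231}$. The delicate point, and the reason characteristic $2$ must be treated separately from \cite{eg5}, is precisely that here one cannot isolate the cyclic part of $\phi$ by projecting onto an isotypic component of the $S_3$-action (which would require inverting $2$ or $|S_3|$); the vanishing of $\Cyc(\phi)$ must instead be squeezed out of the cocycle relation directly.
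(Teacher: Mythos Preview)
Your arguments for (1) and (2) are fine and match the paper's.

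The gap is in (3). You correctly reduce the claim to $\phi_{132}=\phi_{231}$, but then assert that this cannot be extracted from (1), (2) and $\Alt(\phi)=0$ and must come from the Hochschild cocycle identity. This is mistaken, and the mistake is structural: you are treating the six tensors $\phi_{i_1i_2i_3}$ as independent unknowns in a linear system. They are not. They are the images of a \emph{single} element $\phi\in\gr(H)^{\ot 3}$ under the six permutations of tensor factors, so any identity among them can be transported by a further permutation. Concretely, equation (2) says $\phi$ is fixed by the transposition $(13)$; applying the transposition $(23)$ to both sides of $\phi_{123}=\phi_{321}$ immediately yields $\phi_{132}=\phi_{231}$ (since $(23)(13)=(123)$ as permutations). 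This is exactly the paper's proof: ``By (2), we have $\phi_{132}=\phi_{231}$. Thus the claim follows from (1).''

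So the cocycle condition of Lemma~\ref{qtalt}(1) is not needed for (3); it enters only later (Proposition~\ref{killingPhi}) when one must show $\phi$ is a coboundary. Your counting of ``three independent linear equations in six unknowns'' would be valid if the $\phi_{i_1i_2i_3}$ were arbitrary elements of $\gr(H)^{\ot3}$, but the $S_3$-equivariance constraint you are overlooking cuts the solution space further and forces $\Cyc(\phi)=0$.
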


\begin{proof}
(1) Follows from \cite[(2.6)-(2.7)]{eg5} in a  straightforward manner.

(2) Using (1) and Lemma \ref{qtalt}(2), we get
\begin{eqnarray*}
\lefteqn{0=\phi_{312}+\phi_{132}+\phi_{123}+(\phi_{231}+\phi_{213}+\phi_{123})}\\
& = & \Alt(\phi)+\phi_{321}+\phi_{123}\\
& = & \phi_{123}+\phi_{321},
\end{eqnarray*}
as claimed.

(3) By (2), we have $\phi_{132}=\phi_{231}$. Thus the claim follows from (1).
\end{proof}

Following \cite[2.8]{eg5}\footnote{In \cite[2.8]{eg5}, $y_t^{(l)}$ was denoted by $(x_t^l)^*$.}, we set $y_t:=x_t^*$ and $y_t^{(l)}:=(x_t^l)^*$, $1\le t\le n$, $1\le l\le r_t-1$ (so, $y_t^{(1)}=y_t$), and for every $1\le i,j\le n$, let
\begin{equation}\label{betagamma}
\beta_j:=\sum_{l=1}^{2^{r_j}-1}y_j^{(l)}\ot y_j^{(2^{r_j}-l)}.
\end{equation} 

\begin{proposition}\label{killingPhi}
The $3$-cocycle $\phi$ is a coboundary.
\end{proposition}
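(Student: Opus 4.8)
The plan is to show that the normalized Hochschild $3$-cocycle $\phi\in Z^3(\mathcal{O}(\mathcal{G}),k)$ is a coboundary by exploiting the extra symmetry constraints from Lemma \ref{qtalt1}, which come from the symmetric (triangular with $R=1\ot 1$) structure. Since $\gr(H)=k\mathcal{G}$ with $\mathcal{G}=\Gamma\ltimes\mathcal{G}^\circ$ and $\mathcal{O}(\mathcal{G})=\mathcal{O}(\Gamma)\ot\mathcal{O}(\mathcal{G}^\circ)$, and since $\Gamma$ has odd order so that its group cohomology vanishes in characteristic $2$, the essential case is a unipotent/infinitesimal abelian $2$-group scheme, i.e. $\mathcal{O}(\mathcal{G}^\circ)$ is (a quotient of) a polynomial-type Hopf algebra dual to the group algebra of a finite abelian $2$-group. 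First I would reduce via this decomposition to computing $H^3_{\text{Sw}}$-type classes for $\mathcal{O}(P)$ where $P=\prod_t \BZ/2^{r_t}$, so that the dual basis vectors $y_t=x_t^*$ and their divided powers $y_t^{(l)}=(x_t^l)^*$ from \cite[2.8]{eg5} are available.

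The key computational step is to produce an explicit $2$-cochain $\psi$ whose coboundary $d\psi$ equals $\phi$. My expectation is that $\psi$ should be built out of the elements $\beta_j$ defined in \eqref{betagamma}, together with the leading symmetric part of $\phi$; the role of Lemma \ref{qtalt1} is to guarantee that the ``obstruction'' to writing $\phi$ as a coboundary, which a priori lives in the symmetric/alternating part of $\phi^{\ot 3}$, actually vanishes. Concretely, Lemma \ref{qtalt1}(2) says $\phi$ is symmetric under the full reversal $(123)\mapsto(321)$, and Lemma \ref{qtalt1}(3) gives $\Cyc(\phi)=0$; combined these pin down $\phi$ to lie in a small piece of the Hochschild cocycle space, and on that piece the cohomology is controlled by Frobenius-twist data. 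I would first identify the leading term $\phi$ inside $\gr(H)^{\ot 3}[\ell]$, match it against $d\psi$ for a candidate $\psi\in\gr(H)^{\ot 2}[\ell]$ expressed in the $y^{(l)}_t$ basis, and check the cocycle condition from Lemma \ref{qtalt}(1) forces the matching coefficients to be consistent.

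The main obstacle I anticipate is the characteristic $2$ subtlety that $S^2V$ and $\wedge^2V$ do not split, so the Frobenius twist $V^{(1)}=\Gamma^2V/\wedge^2V$ genuinely contributes and one cannot simply antisymmetrize to kill $\phi$ as in odd characteristic. Thus the argument cannot be a formal ``alternation-is-an-isomorphism'' computation; instead the cyclic and reversal symmetries from Lemma \ref{qtalt1} must be used to show that the only surviving component of $\phi$ is a symmetric one that \emph{is} a coboundary, essentially because a symmetric invariant $3$-cocycle on a cocommutative Hopf algebra is cohomologically trivial once the alternating obstruction is removed. I would handle this by reducing, via the $\beta_j$, to the statement that the relevant Sweedler/Hochschild cohomology group for $\mathcal{O}(P)$ in the appropriate degree is spanned by explicit coboundaries, invoking the cocycle and symmetry identities of Lemma \ref{qtalt} and Lemma \ref{qtalt1} to eliminate all other potential classes.

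The cleanest route, and the one I would ultimately commit to, is to exhibit the primitive $2$-cochain $\psi:=\tfrac12\big(\text{leading term built from }\beta_j\big)$ directly and verify $d\psi=\phi$ by a bookkeeping computation in the dual basis, using Lemma \ref{qtalt1}(1)–(3) at each stage to cancel the non-coboundary terms; the symmetry constraints are precisely what make every term outside the image of $d$ vanish, so the verification, while notationally heavy, is routine once the correct $\psi$ is written down.
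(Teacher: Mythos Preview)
Your proposal is a plan rather than a proof, and it has a genuine gap at the crucial step. The paper's argument does not attempt to \emph{construct} a $2$-cochain $\psi$ with $d\psi=\phi$; instead it uses the known structure of $H^3(\mathcal{O}(\mathcal{G}),k)$ from \cite[Proposition 2.4]{eg5} to write
\[
\phi=\sum_{i<j<l}b_{ijl}\,y_i\ot y_j\ot y_l+\sum_{i,j}a_{ij}\,\beta_i\ot y_j+df,
\]
and then shows $b_{ijl}=0$ and $a_{ij}=0$. The $b_{ijl}$ are killed immediately by $\Alt(\phi)=0$, but the $a_{ij}$ are the real obstruction: $\beta_i\ot y_j$ is a genuine nontrivial cohomology class, and no amount of ``writing down $\psi$'' will make it a coboundary. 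The paper's key device is the map $\Psi=\pi\ot\id:\Gamma^2k[\mathcal{G}]\ot k[\mathcal{G}]\to k[\mathcal{G}]^{(1)}\ot k[\mathcal{G}]$ together with the observation that $\pi(\Delta(u))=0$ for $u\in\Rad(H)$, which forces $\Psi(\Cyc(df))=0$; since $\Psi(\Cyc(\beta_i\ot y_j))=y_i^{(2^{r_i-1})}\ot y_j$ are linearly independent, this yields $a_{ij}=0$. Your proposal never isolates this Frobenius-twist projection step, and without it the argument cannot close: the symmetry identities $\Cyc(\phi)=0$ and $\phi_{123}=\phi_{321}$ alone do not kill $\beta_i\ot y_j$, because $\beta_i$ is itself symmetric.

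Two further issues: first, your candidate $\psi:=\tfrac12(\cdots)$ is meaningless in characteristic $2$, which is precisely the setting here and the reason the Frobenius-twist quotient $V^{(1)}$ is needed in place of any averaging trick. Second, your reduction to $\mathcal{O}(P)$ via the odd-order $\Gamma$ is harmless but unnecessary; the paper works directly with $k\mathcal{G}$ because the cohomology description from \cite{eg5} already handles the general connected-by-\'etale case, and the argument is entirely about the radical filtration rather than any splitting of $\mathcal{G}$.
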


\begin{proof} 
By Lemma \ref{qtalt}(1), $\phi\in Z^3(\mathcal{O}(\mathcal{G}),k)$ so we can express it in the following form:
$$
\phi=\sum_{1\le i< j< l\le n}b_{ijl}(y_i\ot y_j\ot y_l)+
\sum_{i,j}a_{ij}\beta_i\otimes y_j + df,$$
for some $b_{ijl},a_{ij}\in k$ and   
$f\in k[\mathcal{G}]^{\ot 2}$. 

Thus by Lemma \ref{qtalt1}(3), we have
\begin{eqnarray*}
\Cyc(df) & = & \sum_{1\le i<j<l\le n}b_{ijl}\text{Cyc}(y_i\ot y_j\ot y_l)+
\sum_{i,j}a_{ij}\Cyc(\beta_i\otimes y_j).
\end{eqnarray*}
Also, since $\Alt(df)=\Alt(\beta_i\otimes y_j)=0$, it follows from Lemma \ref{qtalt}(2) and the above that we have 
\begin{equation*}
0=\Alt(\phi)=\sum_{1\le i<j<l\le n}b_{ijl}\Alt(y_i\ot y_j\ot y_l).
\end{equation*}
Therefore $b_{ijl}=0$ for every $i<j<l$, and we have
\begin{eqnarray*}
\Cyc(df) & = & 
\sum_{i,j}a_{ij}\Cyc(\beta_i\otimes y_j).
\end{eqnarray*}

It is also straightforward to verify that we have
\begin{equation}\label{star}
\Cyc(df)=\Cyc\left((\Delta\ot \id)(f)+(\id\ot \Delta)(f)\right).
\end{equation}

Consider the surjective homomorphism 
$$
\Psi:=\pi\ot \id:\Gamma^2k[\mathcal{G}]\ot k[\mathcal{G}]\to k[\mathcal{G}]^{(1)}\ot k[\mathcal{G}],
$$
where $\pi:\Gamma^2k[\mathcal{G}]\to k[\mathcal{G}]^{(1)}$ is the natural surjective homomorphism. Observe that we have  $\pi(\Delta(u))=0$ for every $u\in \Rad(H)$. Indeed this holds for  $u=u_1\cdots u_m$, where $u_1,\dots, u_m$ are primitive, and each element of $\Rad(H)$ is a linear combination of such with coefficients in $G$. 

Now since $k[\mathcal{G}]$ is cocommutative, it follows from (\ref{star}) that $\Cyc(df)$ is symmetric, hence we have
\begin{equation}\label{2star}
\Psi(\Cyc(df))=0.
\end{equation}
We also have
$$
\Psi(\beta_i\ot y_j)=y_i^{(2^{r_i-1})}\ot y_j.
$$
Thus
$$
\sum_{i,j}a_{ij}y_i^{(2^{r_i-1})}\otimes y_j=0,
$$
which implies that $a_{ij}=0$ for all $i,j$. Thus  
$\phi=df$ is a coboundary, as claimed.
\end{proof}

\begin{lemma}\label{symm1} 
In Proposition \ref{killingPhi} we can choose $f\in \Gamma^2k[\mathcal{G}]$, i.e., we can choose $f$ to be  symmetric. 
\end{lemma}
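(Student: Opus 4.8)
The plan is to exploit the interaction of the Hochschild differential with the reversal of tensor factors. Let $\tau$ be the flip, so $f_{21}=\tau(f)$, and let $r$ denote the reversal $c_1\ot\cdots\ot c_m\mapsto c_m\ot\cdots\ot c_1$ on cochains. First I would check that, because $k[\mathcal{G}]$ is cocommutative, $r$ is a cochain map: a direct computation from the characteristic-$2$ coboundary formula $dg=1\ot g+(\Delta\ot\id)(g)+(\id\ot\Delta)(g)+g\ot 1$ gives $(df)_{321}=d(f_{21})$ for every $f\in k[\mathcal{G}]^{\ot 2}$. Combining this with $\phi_{321}=\phi$ (Lemma \ref{qtalt1}(2)) and $\phi=df$ yields $\phi=d(f_{21})$, so in characteristic $2$ the element
$$w:=f+f_{21}=(\id+\tau)(f)$$
is a $2$-cocycle lying in $\wedge^2 k[\mathcal{G}]$.

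Next I would show $w$ is a coboundary, running the argument of Proposition \ref{killingPhi} one degree down. Writing the class as $[w]=\sum_{i<j}c_{ij}[y_i\ot y_j]+\sum_i e_i[\beta_i]$ in $H^2(\mathcal{O}(\mathcal{G}),k)$, symmetry of $w$ forces all $c_{ij}=0$: since $\beta_i$ and $d$ of a $1$-cochain are reversal-invariant, $0=w+w_{21}=\sum_{i<j}c_{ij}(y_i\ot y_j+y_j\ot y_i)$, and the summands are linearly independent. Applying $\pi$ to the identity $\pi(w)=0$ (valid as $w\in\wedge^2$) then kills the $e_i$ exactly as $\Psi$ kills the $a_{ij}$ in Proposition \ref{killingPhi}, using $\pi(\beta_i)=(y_i^{(2^{r_i-1})})^{(1)}\ne 0$ and $\pi(\Delta u)=0$ for $u\in\Rad(H)$. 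Hence $w=d\psi$ for some $\psi\in\Rad(H)$; in fact the boundary terms cancel in characteristic $2$ and $w=\bar\Delta(\psi)$ is just the reduced coproduct.

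The remaining, and hardest, step is to correct $f$ to a symmetric primitive. Replacing $f$ by $f+z$ with $z\in Z^2$ preserves $df=\phi$ and changes $w$ by $(\id+\tau)(z)=z+z_{21}$, so I need a $2$-cocycle $z$ with $z+z_{21}=w$; then $f':=f+z$ is symmetric with $df'=\phi$. As $z$ runs over cocycles, $z+z_{21}$ runs precisely over $\span\{y_i\ot y_j+y_j\ot y_i\}$ (the $\beta_i$- and coboundary-contributions being reversal-invariant, hence annihilated by $\id+\tau$), so the lemma reduces to the assertion that $w$ lies in this span — equivalently, that the class of $\phi$ in the cohomology of the subcomplex of reversal-invariant cochains vanishes.

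I expect this to be the main obstacle, because it fails for a general symmetric coboundary: when $\psi$ is a product of three primitives $u_1u_2u_3$, one has $w=\bar\Delta(\psi)=(\id+\tau)(z_0)$ with $z_0=\sum u_iu_j\ot u_l$, and $dz_0=\Alt(u_1\ot u_2\ot u_3)$ is \emph{not} the differential of any reversal-invariant $2$-cochain; thus such a $w$ admits no symmetric primitive. Consequently the conditions $\Alt(\phi)=0$ and $\Cyc(\phi)=0$ (Lemmas \ref{qtalt}(2), \ref{qtalt1}(3)) — and not merely that $\phi$ is a symmetric coboundary — must be used to exclude these higher contributions. Concretely, I would aim to re-run the $\Psi=\pi\ot\id$ computation on the cyclic identity (\ref{star}) to show that the part of $w$ outside $\span\{y_i\ot y_j+y_j\ot y_i\}$ is detected by $\Cyc(\phi)$ and therefore vanishes; reading off $w=\sum_{i<j}c'_{ij}(y_i\ot y_j+y_j\ot y_i)$ then gives the cocycle $z:=\sum_{i<j}c'_{ij}\,y_i\ot y_j$ (a sum of cup products of the primitives $y_i$) and hence the desired symmetric $f'=f+z$.
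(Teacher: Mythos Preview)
Your proposal follows the same overall architecture as the paper's proof: deduce that $w:=f+f_{21}$ is a $2$-cocycle; show it is a coboundary (symmetry kills the $y_i\ot y_j$ classes, and $\pi(w)=0$ kills the $\beta_i$ classes---this is exactly the paper's argument); then correct $f$ by a cocycle $z$ with $z+z_{21}=w$. Your reduction of this last step to the assertion $w\in\mathrm{span}\{y_i\ot y_j+y_j\ot y_i\}$ is correct, and your observation that this fails for a generic coboundary (the $u_1u_2u_3$ example) correctly isolates the difficulty.

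The gap is in your resolution of that difficulty. ``Re-running $\Psi=\pi\ot\id$ on the cyclic identity~(\ref{star})'' is not a proof: since $\Cyc(df)=0$, applying $\Psi$ to (\ref{star}) yields $0=0$ and gives no information about $w$. The paper's argument here is different and more concrete. Having written $w=dz$ for a $1$-cochain $z$ (your $\psi$), the paper applies the operator $y\mapsto y\ot 1+1\ot y+\Delta(y)$ to the first tensorand of the identity $f+f_{21}=dz$. The right-hand side becomes the manifestly $S_3$-symmetric expression $z_{123}+z_{12}+z_{23}+z_{13}+z_1+z_2+z_3$; hence the left-hand side equals its own $\Cyc$, and via (\ref{star}) this $\Cyc$ is $\Cyc(df)=\Cyc(\phi)=0$. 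One thus obtains
\[
z_{123}+z_{12}+z_{23}+z_{13}+z_1+z_2+z_3=0,
\]
which immediately rewrites as $w_{12,3}+w_{1,3}+w_{2,3}=0$, i.e.\ the tensorands of $w=dz$ are primitive. Then $w=\sum_{i,j}c_{ij}\,p_i\ot p_j$ with $c_{ij}=c_{ji}$ (symmetry) and $c_{ii}=0$ (from $\pi(w)=0$), giving the desired cocycle $z=\sum_{i<j}c_{ij}\,p_i\ot p_j$. This algebraic identity in $z$ is the missing bridge between $\Cyc(\phi)=0$ and the structural constraint on $w$ that your sketch promises but does not supply.
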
 

\begin{proof} Since $\phi_{123}=\phi_{321}$ by Lemma \ref{qtalt1}(2), we have $df=d(f_{21})$. This implies that  $f+f_{21}\in Z^{2}(\mathcal{O}(\mathcal{G}),k)$ is a $2$-cocycle, so it follows from \cite[Proposition 2.4(2)]{eg5} that we have
$$
f+f_{21}=\sum_i a_i\beta_i+\sum_{i< j}b_{ij}(y_i\ot y_j) +z\ot 1+1\ot z+\Delta(z)
$$   
for some $a_i,b_{ij}\in k$ and $z\in k[\mathcal{G}]$. Since the left hand side is symmetric and $\Delta=\Delta^{cop}$, we must have $b_{ij}=0$ for every $i<j$. Applying the map $\Psi$ then yields $a_i=0$ for every $i$. Thus, we have
\begin{equation}\label{f21}
f+f_{21}=z\ot 1+1\ot z+\Delta(z).
\end{equation}
Hence, applying the operator $y\mapsto y\otimes 1+1\otimes y+\Delta(y)$ to the first tensorand, 
we get 
$$
f_{12,3}+f_{1,3}+f_{2,3}+f_{3,12}+f_{3,1}+f_{3,2}=
z_{123}+z_{12}+z_{23}+z_{13}+z_1+z_2+z_3.
$$
Hence, the left hand side is symmetric, so 
$$
\Cyc(df)=\Cyc(f_{12,3}+f_{1,3}+f_{2,3}+f_{3,12}+f_{3,1}+f_{3,2})=
z_{123}+z_{12}+z_{23}+z_{13}+z_1+z_2+z_3.
$$
Since $\Cyc(df)=0$, this implies that 
\begin{equation}\label{z}
z_{123}+z_{12}+z_{23}+z_{13}+z_1+z_2+z_3=0. 
\end{equation}
Let $w:=z_{1,2}+z_1+z_2$. 
Then Equation (\ref{z}) implies
$$
w_{12,3}+w_{1,3}+w_{2,3}=0. 
$$
This means that the tensorands of $w$ are primitive, hence $w=\sum_{i, j} c_{ij}p_i\otimes p_j$, where $p_i$ is a basis of primitive elements, with $c_{ij}=c_{ji}$. Moreover, $\pi(w)=0$, which implies that $c_{ii}=0$ for all $i$. Now replacing $f$ with $f+\sum_{i<j}c_{ij}p_i\otimes p_j$ (which is possible since this sum is a $2$-cocycle) we come to a situation where $f$ is symmetric, as desired. 
\end{proof}

Choose $f\in (\mathcal{O}(\mathcal{G})^*)^{\ot 2}$ symmetric with the same degree $\ell$ as $\phi$ such that $\phi=df$, which is possible by Lemma \ref{symm1}. Let $\widetilde{f}$ be a symmetric lift of $f$ to $H$. Then the pseudotwist $F:=1+\widetilde{f}$ is symmetric, which implies that $(H,1,\Phi)^{F}=(H^{F},1,\Phi^{F})$, and the pseudotwisted associator $\Phi^{F}$ is equal to $1+$ terms of degree $\ge \ell+1$. By continuing this procedure, we will come to a situation where $(H,1,\Phi)^F=(H^F,1,1)$ for some pseudotwist $F\in H^{\ot 2}$, as desired. This concludes the proof of Theorem \ref{symmwithphimain2} in the case where $R=1$.

\subsubsection{The case $R=1+\dd\ot \dd$ with $\dd\ne 0$.}

In this subsection we will assume that $R=1+\dd\ot \dd$ with $\dd\ne 0$.  
Suppose $\dd$ has degree $p$, and let $\delta$ be its projection to $\gr(H)[p]$.

The following lemma is the analogue of Lemma \ref{qtalt1} in this case.

\begin{lemma}\label{qtalt2a} 
The following hold: 
\begin{enumerate}
\item
$\Delta(\delta)=\delta\ot 1+1\ot \delta$.
\item
The degree of $\Delta(\dd)-\dd\ot 1-1\ot \dd$ is $\ge \ell-p$. 
\item
Let $T\in \gr(H)^{\ot 2}[\ell-p]$ be the part of $\Delta(\dd)-\dd\ot 1-1\ot \dd$ of degree exactly $\ell-p$ (so $T=0$ if $\ell\le 2p$). Then we have 
\begin{enumerate}
\item
$T\ot \delta+\phi_{312}+\phi_{132}+\phi_{123}=0$.
\item
$\delta\ot T+\phi_{231}+\phi_{213}+\phi_{123}=0$.
\item
$\phi_{123}+\phi_{321}=T\ot \delta+\delta\ot T$.
\item
$\Cyc(\phi)=\Cyc(T\otimes \delta)$.
\end{enumerate}
\item 
$T$ is a symmetric $2$-cocycle.
\end{enumerate}  
\end{lemma}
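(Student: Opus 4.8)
The plan is to extract each identity from the defining axioms of the triangular quasi-Hopf algebra $(H,R,\Phi)$ by passing to the associated graded and retaining only the leading term in each relevant degree. Two features of the characteristic-$2$, $\dd^2=0$ setting do most of the work: first, $-1=1$, so that $\Phi^{-1}=1+\phi+(\text{higher})$ has the same leading part as $\Phi$ and all signs disappear; second, any product of two of the correction tensors $R_{13}-1=\dd\ot 1\ot \dd$, $R_{23}-1=1\ot\dd\ot\dd$, $R_{12}-1=\dd\ot\dd\ot 1$ lands in a slot where the two copies of $\dd$ multiply, hence vanishes because $\dd^2=0$. I would use the two hexagon axioms (\cite[(2.6)--(2.7)]{eg5}), the quasi-cocommutativity $\Delta^{\op}(\dd)=R\,\Delta(\dd)\,R^{-1}$, and the quasi-coassociativity $\Phi\cdot(\Delta\ot\id)\Delta(\dd)=(\id\ot\Delta)\Delta(\dd)\cdot\Phi$, together with $\Alt(\phi)=0$ from Lemma \ref{qtalt}(2).

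For (1) and (2) I would work with the first hexagon $(\Delta\ot\id)(R)=\Phi_{312}R_{13}\Phi_{132}^{-1}R_{23}\Phi_{123}$. Writing $D:=\Delta(\dd)-\dd\ot 1-1\ot\dd$, the left side is $1+(\dd\ot 1+1\ot\dd)\ot\dd+D\ot\dd$, while on the right the tensors $\dd\ot 1\ot\dd$ and $1\ot\dd\ot\dd$ occur undressed (their $\Phi$-dressings raise the degree by at least $\ell$) and cancel the corresponding terms on the left; the cross term $R_{13}R_{23}$ drops out by $\dd^2=0$. What remains is $D\ot\dd=(\Phi_{312}\Phi_{132}^{-1}\Phi_{123}-1)+(\deg\ge \ell+2p)$, whose right side begins in degree $\ell$. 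Hence $D\ot\dd$ has no component below degree $\ell$, giving $\deg D\ge \ell-p$, which is (2); and comparing the degree-$2p$ part (where the $\Phi$'s do not yet contribute and the cross term vanishes) yields $\Delta(\delta)\ot\delta=(\delta\ot 1+1\ot\delta)\ot\delta$, so cancelling the nonzero factor $\delta$ gives (1).

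Parts (3a) and (3b) are then the degree-$\ell$ components of the two hexagons: after the cancellations above, the degree-$\ell$ part of $D\ot\dd$ is exactly $T\ot\delta$ (resp. $\delta\ot T$ for the second hexagon), while the surviving $\Phi$-contribution is $\phi_{312}+\phi_{132}+\phi_{123}$ (resp. $\phi_{231}+\phi_{213}+\phi_{123}$), since $\Phi^{-1}$ and $\Phi$ share the leading part $\phi$. Adding (3a) and (3b), the two copies of $\phi_{123}$ cancel and $\Alt(\phi)=0$ rewrites $\phi_{312}+\phi_{132}+\phi_{231}+\phi_{213}$ as $\phi_{123}+\phi_{321}$, giving (3c). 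For (3d) I would apply $\Cyc$ to (3a) and use that $\Cyc(\phi_{312})=\Cyc(\phi_{123})=\Cyc(\phi)$ while $\Cyc(\phi_{132})=\phi_{132}+\phi_{213}+\phi_{321}$, which equals $\Cyc(\phi)$ again by $\Alt(\phi)=0$; the two copies of $\Cyc(\phi)$ cancel and leave $\Cyc(\phi)=\Cyc(T\ot\delta)$.

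Finally, for (4) the symmetry of $T$ comes from quasi-cocommutativity: in $R\,\Delta(\dd)\,R^{-1}$ every correction from conjugating by $R=1+\dd\ot\dd$ meets a copy of $\dd$ in the same slot and dies by $\dd^2=0$, so $\Delta^{\op}(\dd)$ and $\Delta(\dd)$ agree through degree $\ell-p$, forcing $T_{21}=T$; and the cocycle condition is the degree-$(\ell-p)$ part of quasi-coassociativity, where $\Phi$ contributes only the identity (its corrections sit in degree $\ge \ell$), yielding $(\Delta\ot\id)(T)+T\ot 1=(\id\ot\Delta)(T)+1\ot T$, i.e.\ $T\in Z^2(\mathcal{O}(\mathcal{G}),k)$. \emph{The main obstacle} I anticipate is the degree bookkeeping rather than any single identity: one must track the interaction of the two scales $\ell$ and $2p$ carefully, check that the $\Phi$-dressings of the $R$-corrections really are pushed into degree $\ge\ell+2p$, and treat the degenerate range $\ell\le 2p$ (where $T=0$) so that (1) still holds when $D$ could a priori carry a degree-$p$ part.
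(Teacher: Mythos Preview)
Your treatment of (1)--(3) is exactly the paper's: read off the hexagon relations in the relevant degrees and use $\Alt(\phi)=0$ for (3)(c),(d). The paper's own proof is the one-liner ``(1) is clear; (2) and (3) follow immediately from the hexagon relations; (3)(d) is obtained by applying $\Cyc$ to (3)(a) and using that $\Alt(\phi)=0$'', and you have unpacked precisely this.

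The real difference is in (4). You deduce symmetry of $T$ from quasi-cocommutativity $\Delta^{\op}(\dd)=R\,\Delta(\dd)\,R^{-1}$, while the paper extracts it from (3)(c) alone: with $Q:=T+T_{21}$, the $(13)$-invariance of $\phi_{123}+\phi_{321}$ gives $Q\otimes\delta=\delta\otimes Q$, forcing $Q=c\,\delta\otimes\delta$; but $Q$ lies in $\wedge^2\gr(H)$ (it is in the image of $1+\tau$), so $\pi(Q)=0$, whereas $\pi(\delta\otimes\delta)=\delta\neq 0$, hence $c=0$. This buys economy---no further axiom is invoked beyond what (3)(c) already encodes. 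Your route also works, with one correction: the phrase ``every correction \ldots\ dies by $\dd^2=0$'' is only literally true for the terms $(\dd\otimes\dd)(\dd\otimes1)$, $(\dd\otimes\dd)(1\otimes\dd)$ and their flips; the residual pieces $(\dd\otimes\dd)D$ and $D(\dd\otimes\dd)$ do not vanish, they merely have degree $\ge\ell+p$ by (2), and that degree bound is what actually yields $T_{21}=T$. The paper does not spell out the $2$-cocycle half of (4); your quasi-coassociativity argument (in characteristic $2$, $(\Delta\otimes\id)\Delta(\dd)+(\id\otimes\Delta)\Delta(\dd)=dD$ lands in degree $\ge\ell+p$, hence $dT=0$) is a clean way to supply it.
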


\begin{proof}
(1) is clear. 
(2) and (3) follow immediately from the hexagon relations \cite[(2.6)-(2.7)]{eg5} ((3)(d) is obtained by applying $\Cyc$ to (3)(a) and using that $\Alt(\phi)=0$). 
Also, let $Q:=T+T_{21}$. By (3)(c), we have $Q\otimes \delta=\delta\otimes Q$. Thus both left and right tensorands of $Q$ can only be multiples of $\delta$, i.e., $Q$ is a multiple of $\delta\otimes \delta$. But $\pi(Q)=0$, hence $Q=0$, proving (4).
\end{proof}  

\begin{proposition}\label{killingPhia}
The $3$-cocycle $\phi$ has the form 
$$
\phi=T\otimes \delta+df
$$
for some $f\in k[\mathcal{G}]^{\ot 2}[\ell]$. 
\end{proposition}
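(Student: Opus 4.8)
The plan is to mimic the structure of Proposition \ref{killingPhi} from the case $R=1\ot 1$, but now accounting for the extra term $T\ot\delta$ forced by the nontriviality of $\dd$. First I would use Lemma \ref{qtalt}(1) to expand $\phi$ in the standard basis for normalized Hochschild $3$-cocycles of $\mathcal{O}(\mathcal{G})$, writing
$$
\phi=\sum_{1\le i<j<l\le n}b_{ijl}(y_i\ot y_j\ot y_l)+\sum_{i,j}a_{ij}\beta_i\ot y_j+df
$$
for suitable $b_{ijl},a_{ij}\in k$ and $f\in k[\mathcal{G}]^{\ot 2}[\ell]$, exactly as before. The point is that $T\ot\delta$ is itself a normalized $3$-cocycle (since $T$ is a symmetric $2$-cocycle by Lemma \ref{qtalt2a}(4) and $\delta$ is primitive by Lemma \ref{qtalt2a}(1)), so it is legitimate to absorb a copy of it and instead aim to show that $\phi-T\ot\delta$ is a coboundary.

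The key step is to redo the $\Alt$ and $\Cyc$ computations with the new right-hand sides supplied by Lemma \ref{qtalt2a}(3). Applying $\Alt$ to the expansion and using $\Alt(df)=\Alt(\beta_i\ot y_j)=0$ together with $\Alt(\phi)=0$ (Lemma \ref{qtalt}(2)), I expect the $b_{ijl}\Alt(y_i\ot y_j\ot y_l)$ terms to survive and force $b_{ijl}=0$, precisely as in Proposition \ref{killingPhi}, because $T\ot\delta$ contributes nothing to $\Alt(\phi)$. Then applying $\Cyc$ and invoking Lemma \ref{qtalt2a}(3)(d), namely $\Cyc(\phi)=\Cyc(T\ot\delta)$, I would obtain
$$
\Cyc(df)+\sum_{i,j}a_{ij}\Cyc(\beta_i\ot y_j)=\Cyc(T\ot\delta)-\Cyc(T\ot\delta)=0,
$$
after moving the $T\ot\delta$ contribution to the correct side; concretely one rewrites $\phi=T\ot\delta+\sum_{i,j}a_{ij}\beta_i\ot y_j+df$ (with the $b$-terms gone) and takes $\Cyc$.

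The main obstacle is the final vanishing argument for the $a_{ij}$. As in the earlier proof I would use the map $\Psi=\pi\ot\id:\Gamma^2 k[\mathcal{G}]\ot k[\mathcal{G}]\to k[\mathcal{G}]^{(1)}\ot k[\mathcal{G}]$, relying on $\pi(\Delta(u))=0$ for $u\in\Rad(H)$ so that $\Psi(\Cyc(df))=0$ and $\Psi(\beta_i\ot y_j)=y_i^{(2^{r_i-1})}\ot y_j$. The new subtlety is the term $T\ot\delta$: I must check how $\Psi$ interacts with $\Cyc(T\ot\delta)$. Since $T$ is a $2$-cocycle with $\pi(T)$ controlled by the same radical vanishing (its symmetric part is annihilated, being of the form $\Delta$ of radical elements up to the primitive piece that $\delta$ itself is), the $\Cyc(T\ot\delta)$ contribution should either vanish under $\Psi$ or cancel against the coboundary piece, leaving $\sum_{i,j}a_{ij}y_i^{(2^{r_i-1})}\ot y_j=0$ and hence $a_{ij}=0$. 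Establishing this compatibility of $\Psi$ with the $T\ot\delta$ term — verifying that it does not spoil the linear-independence conclusion that killed the $a_{ij}$ in the $\dd=0$ case — is where the real work lies; once it is settled, one concludes $\phi=T\ot\delta+df$ as claimed.
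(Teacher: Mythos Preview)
Your proposal has a genuine gap at the crucial step. The $\Cyc$ computation is confused: from the expansion $\phi=\sum_{i,j}a_{ij}\beta_i\ot y_j+df$ (after killing the $b_{ijl}$) and $\Cyc(\phi)=\Cyc(T\ot\delta)$ you get
\[
\Cyc(df)=\sum_{i,j}a_{ij}\Cyc(\beta_i\ot y_j)+\Cyc(T\ot\delta),
\]
not zero. You cannot ``rewrite $\phi=T\ot\delta+\sum a_{ij}\beta_i\ot y_j+df$'' and subtract $\Cyc(T\ot\delta)$ a second time; that is precisely the assertion to be proved. More importantly, your hope that the $T\ot\delta$ contribution dies under $\Psi$ is unfounded: applying $\Psi$ yields
\[
\sum_{i,j}a_{ij}\,y_i^{(2^{r_i-1})}\ot y_j+\pi(T)\ot\delta=0,
\]
and there is no reason at this stage for $\pi(T)$ to vanish. (That vanishing is established only \emph{later} in the paper, via the pentagon equation and the fact that $T\ot T$ is a $4$-coboundary.) So you cannot conclude $a_{ij}=0$.

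The paper's argument does not attempt to kill the $a_{ij}$. Instead, the displayed relation forces $\sum_j a_{ij}y_j=a_i\delta$ for some scalars $a_i$, and simultaneously $\sum_i a_i\,y_i^{(2^{r_i-1})}=\pi(T)$. Hence $\sum_i a_i\beta_i+T$ is a symmetric $2$-cocycle annihilated by $\pi$, and by the structure of $H^2$ such a class is a coboundary, say $dh$. Since $\delta$ is primitive one has $d(h\ot\delta)=dh\ot\delta$, so
\[
\sum_{i,j}a_{ij}\beta_i\ot y_j=\sum_i a_i\beta_i\ot\delta=T\ot\delta+d(h\ot\delta),
\]
and therefore $\phi=T\ot\delta+d(f'+h\ot\delta)$. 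This absorption of the nonvanishing $a_{ij}$ into a modified coboundary via $h$ is the missing idea in your outline.
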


\begin{proof} 
By Lemma \ref{qtalt}(1), $\phi\in Z^3(\mathcal{O}(\mathcal{G}),k)$ and we can express it in the following form:
$$
\phi=\sum_{1\le i< j< l\le n}b_{ijl}(y_i\ot y_j\ot y_l)+
\sum_{i,j}a_{ij}\beta_i\otimes y_j + df',
$$
for some $b_{ijl},a_{ij}\in k$ and   
$f'\in k[\mathcal{G}]^{\ot 2}$. 

Since $\Alt(df')=0$, using Lemma \ref{qtalt}(2) this implies that
\begin{equation*}
0=\Alt(\phi)=\sum_{1\le i<j<l\le n}b_{ijl}\Alt(y_i\ot y_j\ot y_l).
\end{equation*}
Therefore $b_{ijl}=0$ for every $i<j<l$. Thus by Lemma \ref{qtalt2a}(2)(c), we have
\begin{eqnarray*}
\Cyc(df') & = & 
\sum_{i,j}a_{ij}\Cyc(\beta_i\otimes y_j)+\Cyc(T\otimes \delta).
\end{eqnarray*}

Now by (\ref{2star}), we have
$$
\Psi(\Cyc(df'))=0.
$$
We also have
$$
\pi(\beta_i)=y_i^{(2^{r_i-1})}.
$$
Thus,
\begin{equation}\label{3star}
\sum_{i,j}a_{ij}y_i^{(2^{r_i-1})}\otimes y_j+\pi(T)\otimes \delta=0,
\end{equation}
which implies that 
$$
\sum_j a_{ij}y_j=a_i\delta,\,\,\,\text{and}\,\,\,\sum_i a_iy_i^{(2^{r_i-1})}=\pi(T)
$$
for some $a_i\in k$. 
Hence, 
$$
\sum a_i\beta_i+T=dh 
$$
for some $h\in k[\mathcal{G}]$ (as the left hand side is a symmetric $2$-cocycle killed by $\pi$, hence a coboundary). So, 
$$
\sum a_i\beta_i\otimes \delta=T\otimes \delta+d(h\otimes \delta)
$$ 
(as $d\delta=0$). This implies that $\phi=T\otimes \delta+df$, where $f:=f'+h\otimes \delta$, as desired.  
\end{proof}

\begin{lemma}\label{symm} 
In Proposition \ref{killingPhia} we can choose $f\in \Gamma^2k[\mathcal{G}]$, i.e., we can choose $f$ to be symmetric. 
\end{lemma}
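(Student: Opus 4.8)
The plan is to adapt the proof of Lemma \ref{symm1} to the present setting, where now $\phi=T\otimes\delta+df$ rather than $\phi=df$. The key relation to exploit is Lemma \ref{qtalt2a}(3)(c), namely $\phi_{123}+\phi_{321}=T\otimes\delta+\delta\otimes T$. Since $\phi=T\otimes\delta+df$, the symmetrization $\phi_{123}+\phi_{321}$ equals $(T\otimes\delta+df)+(T\otimes\delta+df)_{321}=T\otimes\delta+\delta\otimes T+df+d(f_{21})$, so comparing with (3)(c) forces $df+d(f_{21})=0$, i.e.\ $d(f+f_{21})=0$. Hence $f+f_{21}\in Z^2(\mathcal{O}(\mathcal{G}),k)$ is a symmetric $2$-cocycle, exactly as in the proof of Lemma \ref{symm1}, and I would invoke \cite[Proposition 2.4(2)]{eg5} to write
$$
f+f_{21}=\sum_i a_i\beta_i+\sum_{i<j}b_{ij}(y_i\ot y_j)+z\ot 1+1\ot z+\Delta(z)
$$
for some $a_i,b_{ij}\in k$ and $z\in k[\mathcal{G}]$.

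From here I would run the same symmetry arguments as before. Because $f+f_{21}$ is manifestly symmetric and $\Delta=\Delta^{cop}$, the antisymmetric contributions must vanish, giving $b_{ij}=0$ for all $i<j$; applying the map $\Psi=\pi\otimes\id$ then kills the $\beta_i$ terms and yields $a_i=0$ for all $i$. This leaves $f+f_{21}=z\ot 1+1\ot z+\Delta(z)$, precisely Equation (\ref{f21}). The remaining task is to modify $f$ by a symmetric $2$-cocycle so as to make it symmetric without disturbing the identity $\phi=T\otimes\delta+df$; this is legitimate since adding any $2$-cocycle $c$ to $f$ changes $df$ by $dc=0$.

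The new wrinkle, and the step I expect to require the most care, is that the cyclic identity is now $\Cyc(\phi)=\Cyc(T\otimes\delta)$ (Lemma \ref{qtalt2a}(3)(d)) rather than $\Cyc(\phi)=0$. Tracking this through the computation in Lemma \ref{symm1}, I would apply the coproduct-type operator to the first tensorand of (\ref{f21}) and reduce, as there, to showing that
$$
z_{123}+z_{12}+z_{23}+z_{13}+z_1+z_2+z_3
$$
is controlled by the discrepancy $\Cyc(T\otimes\delta)$; one must verify that this correction term does not obstruct the conclusion that the element $w:=z_{1,2}+z_1+z_2$ has primitive tensorands. Concretely, since $\delta$ is primitive by Lemma \ref{qtalt2a}(1), the extra contribution $\Cyc(T\otimes\delta)$ should itself be expressible through primitive elements, so after passing to $w$ the analogous equation $w_{12,3}+w_{1,3}+w_{2,3}=0$ (or its mild modification) still forces the tensorands of $w$ to be primitive.

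Thus I would write $w=\sum_{i,j}c_{ij}\,p_i\otimes p_j$ in a basis $\{p_i\}$ of primitives with $c_{ij}=c_{ji}$, and use $\pi(w)=0$ to conclude $c_{ii}=0$ for all $i$. Replacing $f$ by $f+\sum_{i<j}c_{ij}\,p_i\otimes p_j$ (a symmetric $2$-cocycle, hence harmless for $df$) then renders $f$ symmetric, completing the argument. The main obstacle is purely bookkeeping: confirming that the presence of $T\otimes\delta$ and the nonzero value $\Cyc(T\otimes\delta)$ enter only through symmetric, primitive-valued terms, so that every cancellation used in Lemma \ref{symm1} survives. Because $T$ is itself a symmetric $2$-cocycle (Lemma \ref{qtalt2a}(4)) and $\delta$ is primitive, I expect these terms to be benign and the adaptation to go through essentially verbatim.
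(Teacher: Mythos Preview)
Your approach is correct and matches the paper's proof exactly: deduce $df=d(f_{21})$ from Lemma \ref{qtalt2a}(3)(c) together with $\phi=T\otimes\delta+df$, then run the argument of Lemma \ref{symm1}. The paper's proof is in fact just two sentences saying precisely this.

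The one place where you hesitate is unnecessary. You worry that $\Cyc(\phi)=\Cyc(T\otimes\delta)$ rather than $\Cyc(\phi)=0$ introduces a correction term into the computation after (\ref{f21}). But the relevant quantity in that part of the argument is $\Cyc(df)$, not $\Cyc(\phi)$; and since $\phi=T\otimes\delta+df$, we have $\Cyc(df)=\Cyc(\phi)-\Cyc(T\otimes\delta)=0$ by Lemma \ref{qtalt2a}(3)(d). Hence from equation (\ref{f21}) onward the proof of Lemma \ref{symm1} applies \emph{literally verbatim}: equation (\ref{z}) holds, $w$ has primitive tensorands, $\pi(w)=0$ kills the diagonal, and the adjustment $f\mapsto f+\sum_{i<j}c_{ij}\,p_i\otimes p_j$ works. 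There is no bookkeeping involving $T$ or $\delta$ to track at all.
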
 

\begin{proof} Since $\phi_{123}=\phi_{321}+T\otimes \delta+\delta\otimes T$ by Lemma \ref{qtalt2a}(3)(c), we have $df=d(f_{21})$. Thus, $f+f_{21}\in Z^{2}(\mathcal{O}(\mathcal{G}),k)$ is a $2$-cocycle, and we can proceed in exactly the same way as in the proof of Lemma \ref{symm1} to get to a situation where $f$ is symmetric. 
\end{proof} 

\begin{proposition} \label{fourcoc} 
The $4$-cocycle $T\otimes T$ is a coboundary. 
\end{proposition}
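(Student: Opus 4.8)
The plan is to read $T\otimes T$ as the cup-square of the Hochschild $2$-cocycle $T$ and then to produce a primitive for it. First I would note that $T\otimes T$ is a $4$-cocycle: it is the cup product $T\cup T$ of the symmetric $2$-cocycle $T$ from Lemma~\ref{qtalt2a}(4) with itself, and in characteristic $2$ the Leibniz rule gives
$$d(T\cup T)=(dT)\cup T+T\cup(dT)=0.$$
It is worth isolating the easy case: if $T$ were itself a coboundary, $T=d\rho$, then from $dT=0$ we would get $T\otimes T=(d\rho)\cup T=d(\rho\cup T)$. So the entire difficulty lies in the possibility that $[T]\neq 0$ in $H^2(\mathcal{O}(\mathcal{G}),k)$; this must genuinely be confronted, because the symmetric $2$-cocycles $\beta_i$ spanning the relevant space (Lemma~\ref{symm1} and \cite[Prop.~2.4]{eg5}) are not coboundaries, and neither is $T$ in general.

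The approach I would take is to manufacture $T\otimes T$ out of the quasitriangular axioms rather than out of $T$ in isolation. Applying the algebra map $\Delta\otimes\Delta$ to $R=1+\dd\otimes\dd$ gives
$$(\Delta\otimes\Delta)(R)=1+\Delta(\dd)\otimes\Delta(\dd),$$
and since $\Delta(\dd)=\dd\otimes 1+1\otimes\dd+\widetilde T+\cdots$ with $\widetilde T$ lifting $T$, the summand $\widetilde T\otimes\widetilde T$ contributes exactly $T\otimes T$ in degree $2(\ell-p)$. On the other hand, iterating the hexagon relations \cite[(2.6)-(2.7)]{eg5} rewrites $(\Delta\otimes\Delta)(R)$ as the product $R_{13}R_{14}R_{23}R_{24}$ conjugated by a product of associators $\Phi_{i_1i_2i_3}$. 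Because $\dd^2=0$ by Proposition~\ref{trivR}(3), this product collapses: any slot carrying two factors of $\dd$ dies, so only the squarefree slot-products survive and they sit in degrees $2p$ and $4p$.

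Equating the two expressions for $(\Delta\otimes\Delta)(R)$ in degree $2(\ell-p)$ is the crux. The left-hand side contributes $T\otimes T$ together with lower-order terms built from $\delta\otimes 1+1\otimes\delta$ (which is primitive, Lemma~\ref{qtalt2a}(1)) and the subleading corrections to $\Delta(\dd)$; the right-hand side contributes only the terms produced by conjugating the truncated $R$-product by the associators. Rearranging the identity should then express $T\otimes T$ as $d\Xi$ for an explicit $3$-cochain $\Xi$ assembled from $\Phi$ and $\dd$, after checking that the subleading $\delta$-dependent pieces on the left are themselves coboundaries (each such piece is a coproduct-correction of an element of $\gr(H)$, hence of the form $d(\,\cdot\,)$) and that the $R$-product contributes nothing in this degree. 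This is the standard mechanism by which a compatibility axiom forces an a priori nontrivial obstruction to be exact.

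I expect the main obstacle to be the bookkeeping in the previous step: isolating the degree-$2(\ell-p)$ part of the hexagon identity, matching it against $T\otimes T$, and verifying that every remaining term is separately a coboundary or drops out by $\dd^2=0$. As an independent check I would run the cohomological argument in parallel: writing $T\equiv\sum_i a_i\beta_i$ modulo coboundaries as in Lemma~\ref{symm1}, one has $[T\otimes T]=\sum_{i,j}a_ia_j\,[\beta_i]\,[\beta_j]$, so the claim is that this cup-square vanishes in $H^4(\mathcal{O}(\mathcal{G}),k)$. Here the obstacle is sharper: each $[\beta_i]^2$ is on its own nonzero, so $[T]^2=0$ cannot be formal and must use the specific origin of $T$ as the leading coproduct-correction of an element $\dd$ with $\dd^2=0$ — exactly the input furnished by the quasitriangular route above.
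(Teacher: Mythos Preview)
Your strategy differs from the paper's and, as written, has a real gap.

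The paper does not work with $(\Delta\otimes\Delta)(R)$ or the iterated hexagon at all. Instead it first twists $\Phi$ by the symmetric $F=1+\widetilde f$ coming from Lemma~\ref{symm}, so that $\Phi^F-1$ has degree $\ge\ell$ with degree-$\ell$ part exactly $T\otimes\delta$; one may then write $\Phi^F=1+(\Delta(\dd)-\dd\otimes1-1\otimes\dd)\otimes\delta+U$ with $U$ of degree $\ge\ell+1$. Now the \emph{pentagon equation} for $\Phi^F$ is expanded. Its linear part in $T'\otimes\delta$ vanishes because $T$ is a $2$-cocycle and $\delta$ is primitive, and the remaining contribution in degree $2\ell-2p$ is precisely $T\otimes T$ on one side and $du$ (the leading part of $dU$) on the other. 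After a finite sequence of further symmetric twists to push the degree of $U$ up to $2\ell-2p$, this yields $du=T\otimes T$ on the nose. So the coboundary primitive is literally the next unknown piece of the associator, extracted by the pentagon.

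Your hexagon route does not supply such a primitive. When you equate the two expressions for $(\Delta\otimes\Delta)(R)$ in degree $2\ell-2p$, the right-hand side is not built only from ``known'' data: the conjugations by the various $\Phi_{i_1i_2i_3}$ inject the unknown degree-$(2\ell-2p)$ component of $\Phi$ (and intermediate ones), and the left-hand side carries, besides $T\otimes T$, the cross terms $(\delta\otimes1+1\otimes\delta)\otimes T''+T''\otimes(\delta\otimes1+1\otimes\delta)$ coming from the degree-$(2\ell-3p)$ correction $T''$ to $\Delta(\dd)$. None of these terms is of the form $d(\cdot)$ just because it ``is a coproduct-correction'': for instance $\delta\otimes1+1\otimes\delta=\Delta(\delta)$ is \emph{not} $d\delta$ (which is zero), so your assertion that these pieces are separately coboundaries is unjustified. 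What the identity actually gives is a relation among $T\otimes T$, $T''$, and the higher graded pieces of $\Phi$; it does not single out a $3$-cochain $\Xi$ with $d\Xi=T\otimes T$. In effect you have rediscovered that one must bring in a genuinely four-tensor constraint on $\Phi$ itself---and that is exactly the pentagon, which your argument never invokes. Your own diagnostic at the end is correct: since $[\beta_i]^2\ne0$, the vanishing of $[T]^2$ cannot be formal, and the missing structural input is the pentagon for $\Phi^F$, not a second pass through the hexagon.
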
 

\begin{proof} 
Let $f$ be a symmetric element provided by Lemma \ref{symm}, 
and let $\widetilde{f}$ be a symmetric lift of $f$ to $H$. 
Then the pseudotwist $F:=1+\widetilde{f}$ is symmetric.
Thus, $(H,R,\Phi)^{F}=(H^{F},R^F,\Phi^{F})$, and $\Phi^F-1$ has degree $\ge \ell$ with degree $\ell$ part \linebreak $T\otimes \delta$. Thus, we have 
$$
\Phi^F=1+(\Delta(\dd)-\dd \ot 1-1\ot \dd)\ot\delta+U,
$$ 
where $U\in H^{\ot 3}$ has degree $\ge \ell+1$. 
 
The pentagon equation \cite[(2.3)]{eg5} for $\Phi^F$ yields that $dU$ has degree $\ge 2\ell-2p$, and its part of degree $2\ell-2p$ is $T\otimes T$. This means that $U$ has degree $s\le 2\ell-2p$. Let $u$ be the leading part of $U$. If $s<2\ell-2p$ then the pentagon equation \cite[(2.3)]{eg5} yields that $du=0$, and arguing as above we see that $u=df$, where $f$ is symmetric. Thus, by a gauge transformation, we can make sure that $u=0$. Thus, we may assume that $s=2\ell-2p$. In this case \cite[(2.3)]{eg5} yields $du=T\otimes T$, i.e., $T\otimes T$ is a coboundary, as claimed. 
\end{proof} 

\begin{proposition}\label{phicobrd} 
The $3$-cocycle $\phi$ is a coboundary. 
\end{proposition}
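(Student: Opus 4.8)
The plan is to reduce the statement to the assertion that the symmetric $2$-cocycle $T$ is itself a coboundary. By Proposition~\ref{killingPhia} and Lemma~\ref{symm} we have $\phi=T\otimes\delta+df$ with $df$ a coboundary, so it suffices to show $T\otimes\delta$ is a coboundary. By Lemma~\ref{qtalt2a}(1) the element $\delta$ is primitive, hence a Hochschild $1$-cocycle, i.e. $d\delta=0$. Consequently, once we know $T=dg$ for some $g\in k[\mathcal{G}]^{\ot 2}$, the Leibniz rule for the cup product (concatenation) together with $d\delta=0$ gives $T\otimes\delta=dg\otimes\delta=d(g\otimes\delta)$, so $\phi$ is a coboundary. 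In particular $\delta$ plays no essential role, and the cleanest route is to prove outright that $[T]=0$ in $H^2(\mathcal{O}(\mathcal{G}),k)$.

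To this end I would first describe $[T]$ in cohomology. Since $T$ is a symmetric $2$-cocycle (Lemma~\ref{qtalt2a}(4)), the computation of $H^2(\mathcal{O}(\mathcal{G}),k)$ in \cite[Proposition 2.4]{eg5}, applied exactly as in the proof of Lemma~\ref{symm1}, shows that modulo coboundaries $T$ is a combination $\sum_i a_i\beta_i$ of the distinguished symmetric classes $\beta_i$ of \eqref{betagamma}: the off-diagonal contributions $y_i\otimes y_j$ cannot occur, because (as in Lemma~\ref{symm1}) their symmetrizations $y_i\otimes y_j+y_j\otimes y_i$ are coboundaries in characteristic $2$. Thus $[T]=\sum_i a_i[\beta_i]$, and it remains to force all $a_i$ to vanish.

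The key input is Proposition~\ref{fourcoc}: $T\otimes T$ is a coboundary, i.e. $[T]^2=0$ in $H^4(\mathcal{O}(\mathcal{G}),k)$. Writing $T=\sum_i a_i\beta_i+dg$ and expanding, all the mixed terms $(\sum_i a_i\beta_i)\otimes dg$, $\,dg\otimes(\sum_j a_j\beta_j)$ and $dg\otimes dg$ are coboundaries by the Leibniz rule (using $d\beta_i=0$), so
$$
\textstyle\sum_{i,j}a_ia_j\,\beta_i\otimes\beta_j \ \text{ is a coboundary.}
$$
Now I would detect this at the cochain level by a Frobenius-semilinear projection in the spirit of \eqref{2star}: since each $\beta_i\in\Gamma^2k[\mathcal{G}]$ with $\pi(\beta_i)=y_i^{(2^{r_i-1})}$, applying the appropriate symmetrization-and-projection map (which annihilates coboundaries and acts as $\pi$ on each symmetric pair) yields
$$
\Big(\textstyle\sum_i a_i\,y_i^{(2^{r_i-1})}\Big)\otimes\Big(\textstyle\sum_j a_j\,y_j^{(2^{r_j-1})}\Big)=0 ,
$$
whence $\sum_i a_i\,y_i^{(2^{r_i-1})}=0$; the linear independence of the $y_i^{(2^{r_i-1})}$ then forces $a_i=0$ for all $i$. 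Hence $[T]=0$, $T$ is a coboundary, and by the first paragraph $\phi$ is a coboundary.

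I expect the main obstacle to be the third step, namely verifying that the Frobenius projection actually kills $4$-coboundaries — the precise analogue in degree $4$ of the identity $\Psi(\Cyc(df))=0$ used in \eqref{2star}. As there, this should rest on the cocommutativity of $k[\mathcal{G}]$ and on $\pi(\Delta(u))=0$ for $u\in\Rad(H)$, but choosing the correct symmetrization so that the coboundary lands in the domain $\Gamma^2k[\mathcal{G}]\otimes\Gamma^2k[\mathcal{G}]$ of $\pi\otimes\pi$ is the delicate point. The virtue of this cochain-level detection is that it isolates exactly the ``polynomial'' classes $[\beta_i]^2$ and never appeals to the exterior/nilpotent classes $[y_i]$ (present for the factors with $r_i\ge 2$), which could otherwise obscure an argument phrased purely in terms of the cohomology ring.
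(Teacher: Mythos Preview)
Your strategy coincides with the paper's: use Proposition~\ref{fourcoc} to get $[T]^2=0$ in $H^4(\mathcal{O}(\mathcal{G}),k)$, deduce from the structure of the cohomology that $[T]=0$ (equivalently $\pi(T)=0$), and conclude that $\phi$ is a coboundary. The difference is only in the execution of the middle step. You attempt to build a degree-$4$ analogue of the projection $\Psi$ and verify by hand that it kills $4$-coboundaries --- and you rightly identify this as the delicate point. The paper bypasses this entirely: it simply invokes \cite[Proposition~2.4(2)]{eg5}, which describes the ring $H^*(\mathcal{O}(\mathcal{G}),k)$ well enough that $[T]^2=0$ together with $[T]\in\mathrm{span}\{[\beta_i]\}$ forces $[T]=0$, i.e.\ $\pi(T)=0$. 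So the obstacle you anticipated never arises, because the linear independence of the classes $[\beta_i]^2$ in $H^4$ is already packaged in the cited structural result rather than being re-proved at the cochain level.

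For the final step the paper is also a touch more direct: instead of writing $T=dg$ and then $T\otimes\delta=d(g\otimes\delta)$ via Leibniz (which is correct), it plugs $\pi(T)=0$ back into \eqref{3star} from the proof of Proposition~\ref{killingPhia} to obtain $a_{ij}=0$ for all $i,j$, so that $\phi=df'$ immediately from the decomposition already established there.
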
 

\begin{proof} 
By \cite[Proposition 2.4(2)]{eg5} on the structure of cohomology, $\pi(T)=0$. Thus by (\ref{3star}), $a_{ij}=0$ for all $i,j$, so $\phi$ is a coboundary.  
\end{proof} 

We can now proceed as in the case $R=1$. Namely, by Proposition \ref{phicobrd}, we have 
$\phi=df$ for some $f\in (\mathcal{O}(\mathcal{G})^*)^{\ot 2}$ with the same degree $\ell$ as $\phi$, and by Lemma \ref{symm}, we can choose $f$ to be symmetric. Then letting $\widetilde{f}$ be a symmetric lift of $f$ to $H$, we get the symmetric pseudotwist $F:=1+\widetilde{f}$, and by this pseudotwist we come to the situation where $\Phi-1$ has degree $\ge \ell+1$. Thus $\Delta(\dd)-\dd\otimes 1-1\otimes \dd$ also has degree $\ge \ell+1$. 

However, unlike in the case $R=1$, we are not done yet since the pseudotwist $F$ spoils 
the $R$-matrix. Namely, since $f$ is symmetric, $R$ has been brought to the form
$$
R=1+\dd\ot \dd+[\dd\ot \dd,f]+{\rm terms\,\,of\,\,degree\,\,> 2\ell-2p}. 
$$
Thus, we need the following lemma.

\begin{lemma}\label{last}
We can twist further to make sure that $R=1+\dd\otimes \dd$ and still $\Phi-1$ has degree $\ge \ell+1$.
\end{lemma}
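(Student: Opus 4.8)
The plan is to correct $R$ back to the standard form by one further pseudotwist which, crucially, disturbs $\Phi$ only in degrees $>\ell$, so that the estimate $\deg(\Phi-1)\ge \ell+1$ just obtained is preserved. I would organize the correction $R-1-\dd\ot\dd$ by degree. Its tail of degree $>2\ell-2p$ is harmless: once it is the lowest-degree discrepancy, Proposition \ref{trivR}(2) applied with $\dd'=\dd$ removes it by a pseudotwist $J$ with $J-1$ of degree $\ge\frac{n}{2}+p>\ell$ (as $n>2\ell-2p$), and such a $J$ changes $\Phi$ only in degrees $>\ell$. Thus the whole difficulty is the leading term $[\dd\ot\dd,f]$, of degree $\ell+2p$, which (precisely when $\ell\ge 4p$) lies in the dangerous range $\le 2\ell-2p$, where a naive use of Proposition \ref{trivR}(2) would afford a pseudotwist of degree only $\frac{\ell}{2}+2p\le\ell$ and would re-spoil $\Phi$.

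To remove $[\dd\ot\dd,f]$ cheaply I would use that it is symmetric --- this follows from $R_{21}R=1\ot 1$ together with $\dd^2=0$, in characteristic $2$ --- and I claim moreover that it lies in $\wedge^2\gr(H)$, i.e. $\pi([\dd\ot\dd,f])=0$ for the natural surjection $\pi\colon\Gamma^2\gr(H)\to\gr(H)^{(1)}$. Granting this, write the degree $\ell+2p$ part of the correction as $\widetilde{q}_{21}-\widetilde{q}$ (possible since $\wedge^2=\Image(\id+\tau)$ in characteristic $2$), and cancel it by the pseudotwist $J=1+\widetilde{q}$. Since $\deg(\widetilde{q})=\ell+2p>\ell$, twisting by $J$ fixes the leading part of $\Phi-1$, so $\Phi-1$ still has degree $\ge\ell+1$. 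One then iterates this cancellation over the degrees in $(\ell,\,2\ell-2p]$ and finishes with the harmless tail as above. (Alternatively, one may try to remove $[\dd\ot\dd,f]$ by a coboundary twist, i.e. conjugation by some $x=1+\xi\in 1+\Rad(H)$ as in Remark \ref{stable}, which sends $\dd\ot\dd$ to $(x\dd x^{-1})\ot(x\dd x^{-1})$ and alters the correction by $[\xi\ot 1+1\ot\xi,\dd\ot\dd]$ while fixing the lowest-degree part of $\Phi-1$; this succeeds when the correction additionally has the form $a\ot\delta+\delta\ot a$.)

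The main obstacle is exactly the vanishing $\pi([\dd\ot\dd,f])=0$. Writing the symmetric $f$ as $\sum_j c_j\ot c_j$ modulo $\wedge^2$ and using that $v\mapsto v^{(1)}$ is additive, that $\pi$ is multiplicative, and that $\dd^2=0$, this projection reduces to $\bigl(\sum_j[\delta,c_j]\bigr)^{(1)}$, where $\delta$ is the leading part of $\dd$; it vanishes once $\delta$ is central in $\gr(H)=k\mathcal{G}$. I expect the centrality of $\delta$ to be the real content. It should be extracted from the quasitriangularity identity $R\Delta(h)=\Delta^{\op}(h)R$: since $\gr(H)$ is cocommutative and $\Phi=1$ in the relevant degrees, this forces $\dd\ot\dd$ to commute with $\Delta(\gr(H))$ to leading order, i.e. $[\delta\ot\delta,\Delta(h)]=0$. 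Evaluating on a group-like $g$ gives $(g\delta g^{-1})\ot(g\delta g^{-1})=\delta\ot\delta$, hence $g\delta g^{-1}=\delta$ because $1$ is the only square root of unity in characteristic $2$; evaluating on a primitive $g$ gives $[\delta,g]\ot\delta+\delta\ot[\delta,g]=0$, forcing $[\delta,g]\in k\delta$ and therefore $[\delta,g]=0$ by comparing degrees. As $\gr(H)=k\mathcal{G}$ (with $\mathcal{G}=\Gamma\ltimes\mathcal{G}^{\circ}$, cf. Remark \ref{epsilon=1}) is generated as an algebra by its group-likes and primitives, $\delta$ is central, which yields $\pi([\dd\ot\dd,f])=0$ and completes the lemma.
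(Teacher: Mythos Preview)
Your argument has a genuine gap at the key step: the centrality of $\delta$ in $\gr(H)$ is not justified, and in fact need not hold. You try to extract it from $R\Delta(h)=\Delta^{\op}(h)R$, but this relation lives in $H$, not in $\gr(H)$. Writing it out with $R=1+\dd\ot\dd$ gives
\[
\Delta(h)-\Delta^{\op}(h)=\Delta^{\op}(h)(\dd\ot\dd)+(\dd\ot\dd)\Delta(h),
\]
and for $h$ of degree $d$ the degree $d+2p$ component reads
\[
\bigl(\text{degree }d+2p\text{ part of }\Delta(h)-\Delta^{\op}(h)\bigr)=[\gr(\Delta)(\bar h),\,\delta\ot\delta].
\]
Cocommutativity of $\gr(H)$ only tells you that $\Delta(h)-\Delta^{\op}(h)$ has degree $\ge d+1$; there is no reason its degree $d+2p$ part should vanish, so the right-hand commutator can be nonzero. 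Concretely, what you actually need is $[\delta,v]=0$ for $v=\pi(f)$ (your computation $\pi([\dd\ot\dd,f])=[\delta,v]^{(1)}$ is correct), and nothing in the hypotheses forces this. Your fallback assertion that $k\mathcal{G}$ is generated as an algebra by group-likes and primitives is also false in positive characteristic (think of $k[\mu_{p^2}]$, or more generally any $k\mathcal{G}$ containing a height-$\ge 2$ infinitesimal piece). Finally, even granting the first cancellation, your iteration over degrees in $(\ell,2\ell-2p]$ is not justified: the new leading discrepancy need not arise as a commutator with $\dd\ot\dd$, so you have no mechanism to make its $\pi$ vanish.

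The paper does \emph{not} try to show $[\delta,v]=0$. Instead it writes $f=v\ot v+h+h_{21}$ and first twists by $J=1+[\dd\ot\dd,h]+\dd v\ot v\dd$ (note: this $J$ is \emph{not} symmetric), which has $\deg(J-1)=\ell+2p>\ell$ and hence is harmless for $\Phi$, bringing the discrepancy to the pure form $[\dd,v]\ot[\dd,v]$. It then applies the \emph{asymmetric} twist $J=1+\dd\ot[\dd,v]$, of degree $\tfrac{\ell}{2}+2p$. When $\ell<4p$ this degree exceeds $\ell$ and one finishes directly with Proposition~\ref{trivR}. In the hard case $\ell\ge 4p$ this twist does disturb $\Phi$, producing $\Phi-1=\dd\ot W+\,(\text{degree}\ge\ell+1)$ with $W=\Delta([\dd,v])+[\dd,v]\ot 1+1\ot[\dd,v]$; the point is that the hexagon relations then force $W=0$ (or $\deg W>\ell-p$), so one is back to $\deg(\Phi-1)\ge\ell+1$ with $R-1-\dd\ot\dd$ of degree $>2\ell-2p$, and Proposition~\ref{trivR} finishes. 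In short, the paper absorbs the possibly nonzero $[\delta,v]$ into an asymmetric twist and controls the resulting damage to $\Phi$ via the hexagon identities, rather than attempting to argue the damage away a priori.
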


\begin{proof}
Let $v:=\pi(f)$. Then $f=v\otimes v+h+h_{21}$ for some $h\in k[\mathcal{G}]^{\ot 2}$. 
Thus, by twisting by the pseudotwist $J:=1+[\dd\otimes \dd,h]+\dd v\otimes v\dd$, we come to the situation where $\Phi-1$ still has degree 
$\ge \ell+1$, but 
$$
R=1+\dd\ot \dd+[\dd,v]\ot [\dd,v]+{\rm terms\,\,of\,\,degree\,\,> 2\ell-2p}. 
$$

Now, if $\ell<4p$ then $\ell/2+2p>\ell$, so twisting by $J:=1+\dd\otimes [\dd,v]$, we get to a situation when $\Phi-1$ is of degree $\ge \ell+1$ and $$
R=1+\dd\ot \dd+{\rm terms\,\,of\,\,degree\,\,> 2\ell-2p}. 
$$ 
Now Proposition \ref{trivR} implies that using twists $J$ with $J-1$ of degree $\ge \ell+1$ we can come to a situation where $\Phi=1$ modulo degree $\ge \ell+1$ and $R=1+\dd\ot \dd$ on the nose, providing the desired induction step. 

It remains to consider the situation $\ell\ge 4p$. By twisting by $J:=1+\dd\otimes [\dd,v]$, we will get to a situation where $\Phi-1=\dd\otimes W$ + terms of degree $\ge \ell+1$ and $R=1+\dd\ot \dd + {\rm terms\,\,of\,\,degree\,\,> 2\ell-2p}$, where $$W:=\Delta([\dd,v])+[\dd,v]\ot 1+1\ot [\dd,v].$$ If 
$\deg(W)>\ell-p$ then we are done with the induction step, so it remains to consider the case 
$\deg(W)\le \ell-p$. In this case the hexagon relations \cite[(2.6)-(2.7)]{eg5} yield $W=0$. 
Thus we come to a situation where $\Phi-1$ has degree $\ge \ell+1$ and $R-1-\dd\ot \dd$ has degree $> 2\ell-2p$. So by Proposition \ref{trivR}, by applying twists of degree $>\ell$, we can make sure that $R=1+\dd\otimes \dd$ and still $\Phi-1$ has degree $\ge \ell+1$, as desired.
\end{proof}

Thus it follows from the above that by continuing this procedure, we will come to a situation where $$(H,1+\dd\ot \dd,\Phi)^F=(H^F,1+\dd\ot \dd,1)$$ for some pseudotwist $F\in H^{\ot 2}$, as desired. This concludes the proof of Theorem \ref{symmwithphimain2}
in the case where $R=1+\dd\ot \dd$.

The proofs of Theorems \ref{symmwithphimain2} and  \ref{classuniptr2} are complete. \qed

\begin{remark} 
Here is another short proof of the case when $R$ is twist equivalent to $1$, which uses the result of Coulembier. If $R=1$ then the symmetric square of a representation $V$ is the usual one, so for any injection 
$k\to V$ the induced map $k\to S^2V$ is injective. By \cite[Theorem C]{c}, this implies
that the category ${\rm Rep}(H,1,\Phi)$ is locally semisimple. Hence by \cite[Proposition 6.2.2]{c}, the maximal Tannakian subcategory of ${\rm Rep}(H,1,\Phi)$ is 
a Serre subcategory. Since the subcategory of ${\rm Rep}(H,1,\Phi)$ generated by simple objects is Tannakian, we see that the whole category ${\rm Rep}(H,1,\Phi)$ is Tannakian, which implies the desired statement. 
\end{remark} 

\begin{remark} 
The case when $R\ne 1$ is more subtle, as it is not captured by first order deformation theory. Indeed, the category $\mathcal{D}={\rm Rep}(k[\dd]/(\dd^2),1+\dd\otimes \dd)$ 
has a nontrivial first order deformation over $k[h]/(h^2)$, with the same $R$-matrix $R$, but with 
$\Delta(\dd)=\dd\otimes 1+1\otimes \dd+h\dd\otimes \dd$ and associator $\Phi:=1+h\dd\otimes \dd\otimes \dd$. 
This deformation is nontrivial because $\phi:=\dd\otimes \dd\otimes \dd$ is a nontrivial $3$-cocycle. However, it does not lift to $k[h]/(h^3)$, as the difference between the left hand side and the right hand side of the pentagon equation \cite[(2.3)]{eg5} is $h^2\dd^{\otimes 4}$.

The existence of such deformations is typical. For example, consider the category $\Vect(\mathbb{Z}/p\mathbb{Z})$ in characteristic $p>0$. Clearly, it has no nontrivial formal deformations, since $H^3(\mathbb{Z}/p\mathbb{Z},k^{\times})$ is trivial. However, it has a nontrivial first order deformation, since $H^3(\mathbb{Z}/p\mathbb{Z},k)=k$. This deformation in fact lifts modulo $h^i$ for any $i\le p$, but does not lift modulo $h^{p+1}$. This is because $\mu_p$ and $\alpha_p$ are ``the same" up to order $p-1$ inclusively, but differ in order $p$. 
\end{remark}

\begin{corollary}\label{symchevprop}
Let $(H,R)$ be a finite dimensional triangular Hopf algebra with the Chevalley property over $k$. Then $(H,R)$ is twist equivalent to a triangular Hopf algebra with $R$-matrix $1+\dd\ot \dd$ for some $\dd\in P(H)$ such that $\dd^2=0$. 
\end{corollary}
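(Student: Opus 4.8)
The plan is to derive Corollary \ref{symchevprop} from Theorem \ref{symmwithphimain2} by exploiting the fact that a genuine Hopf algebra is precisely a quasi-Hopf algebra with trivial associator $\Phi=1$. First I would apply Theorem \ref{symmwithphimain2} to the pair $(H,R)$, regarded as the triangular quasi-Hopf algebra $(H,R,1)$. The theorem produces a \emph{pseudotwist} $J\in H^{\ot 2}$ carrying $(H,R,1)$ to a triangular Hopf algebra $(H^J,R',1)$ with $R'=1+\dd\ot\dd$, where $\dd\in P(H^J)$ and $\dd^2=0$. The only gap between this conclusion and the statement of the corollary is the distinction between a \emph{pseudotwist} (the notion appropriate to quasi-Hopf algebras, which may alter $\Phi$) and an honest \emph{twist} (which preserves the coassociative, $\Phi=1$ setting). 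So the real content of the corollary is that, when we start and end with $\Phi=1$, the pseudotwist equivalence furnished by the theorem can be taken to be an ordinary twist equivalence.

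Accordingly, the key step is to verify that a pseudotwist $J$ relating two coassociative triangular Hopf algebras is automatically a gauge transformation (twist) in the usual Drinfeld sense, i.e.\ that the pentagon-type obstruction $d_*J$, which measures the change in associator, vanishes. Concretely, if $\Phi=1$ both before and after, then the pseudotwist condition forces $J$ to satisfy the $2$-cocycle (twist) equation $(\Delta\ot\id)(J)(J\ot 1)=(\id\ot\Delta)(J)(1\ot J)$ rather than merely the weaker quasi-Hopf version with $\Phi_0=\Phi=1$ on both sides. Since the associator stays trivial throughout, the correction terms drop out and $J$ is a genuine twist for the coalgebra structure of $H$. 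I would phrase this as: a pseudotwist between two ordinary Hopf algebras (where $\Phi=\Phi'=1$) is the same thing as a twist.

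The main obstacle, and the one point that deserves care rather than being waved through, is tracking this equivalence through the inductive construction in the proof of Theorem \ref{symmwithphimain2}: that proof trivializes $\Phi$ in stages, and along the way the intermediate objects genuinely have nontrivial $\Phi$, so the individual pseudotwists $F=1+\widetilde f$ used in Section 2.3 are not twists on their own. What saves the argument is that the \emph{composite} pseudotwist relates the two endpoints, both of which have $\Phi=1$ (the input because we feed in $(H,R,1)$, and the output by the conclusion of the theorem), and it is only the composite that must be shown to be a bona fide twist. Once the formal observation that a pseudotwist between Hopf algebras is a twist is in place, the corollary follows immediately, with $\dd\in P(H^J)=P(H)$ (the primitive elements being intrinsic to the fixed coalgebra structure, unchanged by twisting) and $\dd^2=0$ as supplied by Proposition \ref{trivR}(3).

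Thus the proof is genuinely short: invoke Theorem \ref{symmwithphimain2} for $(H,R,1)$, observe that the resulting pseudotwist equivalence between the two honest Hopf algebras $(H,R)$ and $(H^J,1+\dd\ot\dd)$ is an ordinary twist equivalence because the associators at both ends are trivial, and read off that $\dd\in P(H)$ with $\dd^2=0$.
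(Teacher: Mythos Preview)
Your approach is exactly the paper's: apply Theorem \ref{symmwithphimain2} to $(H,R,1)$, then observe that a pseudotwist $J$ satisfying $1^J=1$ (i.e.\ carrying trivial associator to trivial associator) is automatically an honest twist. The paper states this in one line.

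One correction, though: your parenthetical claim that $P(H^J)=P(H)$ because ``the primitive elements [are] intrinsic to the fixed coalgebra structure, unchanged by twisting'' is wrong. Twisting \emph{does} change the comultiplication, $\Delta^J(h)=J^{-1}\Delta(h)J$, and hence changes which elements are primitive. The notation $P(H)$ in the corollary (and in Theorem \ref{symmwithphimain2}) refers to primitives in the \emph{resulting} Hopf algebra structure after twisting; since the underlying algebra is unchanged the paper still writes $H$, but the relevant $\Delta$ is the twisted one. You do not need the equality $P(H^J)=P(H)$ at all---just drop that remark and the argument stands.
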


\begin{proof}
Applying Theorem \ref{symmwithphimain2} to $(H,R,1)$ yields the existence of a pseudotwist $J$ for $H$ such that $(H,R,1)^J=(H^J,1+\dd\ot \dd,1)$. In particular, we have $1^J=1$, which is equivalent to $J$ being a twist.
\end{proof}

\begin{corollary}\label{alphapsym2}
Let $\mathcal{C}$ be a finite symmetric tensor category over $k$ such that $\FPdim(\mathcal{C})=2$. Then $\mathcal{C}$ is symmetric tensor equivalent to either $\Vect(\mathbb{Z}/2\mathbb{Z})$, $\Rep(\mathbb{Z}/2\mathbb{Z})$, $\Rep(\alpha_2)$ or $\mathcal{D}$.
\end{corollary}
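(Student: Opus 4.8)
The plan is to reduce, via Frobenius--Perron dimension, to the classification of $2$-dimensional triangular Hopf algebras, and then to match each one with a named category.

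First I would exploit that $\FPdim(\mathcal{C})$ is as small as $2$. Writing $\FPdim(\mathcal{C})=\sum_{X\in\Irr(\mathcal{C})}\FPdim(X)\FPdim(P_X)=2$, where $P_X$ denotes the projective cover of the simple object $X$, and noting that $\FPdim(\mathbf 1)=1$ while each summand satisfies $\FPdim(X)\FPdim(P_X)\ge\FPdim(X)\ge 1$ (with the unit contributing $\FPdim(P_{\mathbf 1})\ge 1$), I would conclude that there are exactly two possibilities: either $\mathcal{C}$ is semisimple with precisely two simple objects, both invertible, or $\mathbf 1$ is the unique simple object and its projective cover has length $2$ (so $\mathcal{C}$ is not semisimple). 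In either case every tensor product of simple objects is semisimple, so $\mathcal{C}$ has the Chevalley property. This is the step that makes the main results of this section applicable.

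Next I would invoke the reduction at the beginning of Section~\ref{char2} together with Theorem~\ref{symmwithphimain2} (equivalently Corollary~\ref{symchevprop}): since $\mathcal{C}$ has the Chevalley property, it is symmetric tensor equivalent to $\Rep(H,1+\dd\otimes\dd)$ for a finite-dimensional triangular Hopf algebra $H$ with $\dd\in P(H)$ and $\dd^2=0$. Since $\FPdim(\Rep(H,R))=\dim_k H$ and $\FPdim$ is invariant under symmetric tensor equivalence, we get $\dim_k H=\FPdim(\mathcal{C})=2$. Thus everything comes down to understanding $2$-dimensional triangular Hopf algebras of the stated form.

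Finally I would classify. A $2$-dimensional Hopf algebra over $k$ is both commutative and cocommutative, hence of the form $k\mathcal{G}$ for a finite group scheme $\mathcal{G}$ of order $2$; over $k$ the order-$2$ group schemes are $\mathbb{Z}/2\mathbb{Z}$, $\mu_2$ and $\alpha_2$, yielding the three Hopf algebras $k[\mathbb{Z}/2\mathbb{Z}]$, $\mathcal{O}(\mathbb{Z}/2\mathbb{Z})\cong k\times k$, and $k\alpha_2=k[\dd]/(\dd^2)$ with $\dd$ primitive. For each I would read off which primitives $\dd$ with $\dd^2=0$ occur, and identify the resulting category: in $\mathcal{O}(\mathbb{Z}/2\mathbb{Z})$ the nonzero primitive $\xi$ is idempotent, $\xi^2=\xi\ne 0$, so necessarily $\dd=0$ and $\Rep(H,1)=\Vect(\mathbb{Z}/2\mathbb{Z})$; in $k[\mathbb{Z}/2\mathbb{Z}]$ the only primitive is $0$ (a direct computation on $a+bg$), so again $\dd=0$ and $\Rep(H,1)=\Rep(\mathbb{Z}/2\mathbb{Z})$; and in $k\alpha_2$ the primitives form the line $k\dd$, giving $\Rep(\alpha_2)$ when $\dd=0$ and $\mathcal{D}$ when $\dd\ne 0$, all nonzero choices being equivalent via the Hopf automorphism $\dd\mapsto c\dd$, which carries $1+\dd\otimes\dd$ to $1+c^2\,\dd\otimes\dd$. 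This yields exactly the four listed categories.

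I expect the main obstacle to be the characteristic $2$ bookkeeping that keeps $\Vect(\mathbb{Z}/2\mathbb{Z})$ and $\Rep(\mathbb{Z}/2\mathbb{Z})$ genuinely distinct here, whereas they coincide in characteristic $0$; in particular one must track that $\Vect(\mathbb{Z}/2\mathbb{Z})$ is semisimple while $\Rep(\mathbb{Z}/2\mathbb{Z})$ is not, and that the primitives of $\mathcal{O}(\mathbb{Z}/2\mathbb{Z})$ are idempotent (forcing $R=1$ there). Once the Chevalley property is secured and the three $2$-dimensional Hopf algebras are pinned down together with their admissible primitives, the rest is a direct application of Theorem~\ref{symmwithphimain2}.
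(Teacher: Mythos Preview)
Your proof is correct and follows essentially the same route as the paper, which simply writes ``Follows immediately from Theorem~\ref{classuniptr2}.'' You have merely made explicit the two steps the paper leaves to the reader: first, that $\FPdim(\mathcal{C})=2$ forces the Chevalley property (either two invertible simples, or a unique simple), so that the main theorem applies; second, the enumeration of the $2$-dimensional triangular Hopf algebras $(H,1+\dd\otimes\dd)$ with $\dd\in P(H)$, $\dd^2=0$, which yields exactly the four listed categories. Your choice to phrase the reduction via Theorem~\ref{symmwithphimain2} rather than Theorem~\ref{classuniptr2} is immaterial, since the latter is the categorical reformulation of the former.
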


\begin{proof}
Follows immediately from Theorem \ref{classuniptr2}.
\end{proof}

\subsection{Strengthening of \cite[Theorem 1.1]{eg5} and Theorem \ref{classuniptr2}}

The arguments used in this section and \cite[Section 3]{eg5} in fact prove a stronger result. Namely, we have the following theorem.

\begin{theorem}\label{stronger}
Let $\mathcal{E}\subset \mathcal{C}$ be finite symmetric tensor categories over an algebraically closed field $k$ with characteristic $p>0$, such that $\mathcal{E}$ contains all the simples of $\mathcal{C}$. The following hold:
\begin{enumerate}
\item
Suppose $p>2$. If $\mathcal{E}$ has a fiber functor to ${\rm sVec}$, then so does $\mathcal{C}$. 

\item
Suppose $p=2$. If $\mathcal{E}$ has a fiber functor to ${\rm Vec}$, then $\mathcal{C}$ has a fiber functor to $\mathcal{D}$.
\end{enumerate}
\end{theorem}

Indeed, in both cases it follows that $\mathcal{C}$ is integral, so we have $\mathcal{C}=\Rep(H,R,\Phi)$ for some finite dimensional triangular quasi-Hopf algebra over $k$. Now the arguments are exactly the same, except the radical of $H$ should be replaced by the annihilator of $\mathcal{E}$ inside $\mathcal{C}$, which is a nilpotent quasi-Hopf ideal of $H$ since $\mathcal{E}$ contains all the simples of $\mathcal{C}$.  

\section{Twists and Sweedler cohomology for finite abelian $p$-groups}

In this section we let $K$ be an arbitrary field of characteristic $p>0$, and $\mathbb{F}_q$ be a finite field of characteristic $p>0$.

\subsection{Truncated Witt vectors} Let $W_n(K)$ be the {\em ring of truncated Witt vectors of length $n$ with coefficients in $K$}. Recall that $W_n(K)= K^n$ as a set, with nontrivial addition and multiplication given, e.g., in \cite[VI, p.330-332]{l}. 
\begin{example}
We have the following:
\begin{enumerate} 
\item
$W_1(K)=K$ as rings.

\item
The addition and multiplication in $W_2(K)$ are given as follows
$$
(x_0,x_1)+(y_0,y_1)=\left(x_0+y_0,x_1+y_1+\sum_{i=1}^{p-1}\frac{1}{i}\binom{p-1}{i-1}x_0^i y_0^{p-i}\right)
$$
and
$$
(x_0,x_1)(y_0,y_1)=\left(x_0y_0,y_0^px_1+y_1x_0^p\right).
$$

\item
$W_n(\mathbb{F}_p)=\mathbb{Z}/p^n\mathbb{Z}$ for every $n\ge 1$.
\end{enumerate}
\end{example} 

For $x:=(x_0,\dots,x_{n-1})\in W_n(K)$, let $F(x)=(x_0^p,\dots,x_{n-1}^p)$. (Note that if $n>1$ then $F(x)\ne x^p$.) Recall that $F:W_n(K)\to W_n(K)$ is a ring homomorphism, and we have an additive homomorphism 
$$\mathscr{P}:W_n(K)\to W_n(K),\,\,\,x\mapsto F(x)-x.$$  
The kernel of $\mathscr{P}$ is the cyclic group $W_n(\mathbb{F}_p)=\mathbb{Z}/p^n\mathbb{Z}$.

\begin{lemma}\label{wittfq}
The following hold:
\begin{enumerate}
\item
If $K$ is perfect then $W_n(K)/\mathscr{P}(W_n(K))$ is a free $\mathbb{Z}/p^n\mathbb{Z}$-module.
\item
$W_n(\mathbb{F}_q)/\mathscr{P}(W_n(\mathbb{F}_q))\cong \mathbb{Z}/p^n\mathbb{Z}$.
\end{enumerate}
\end{lemma}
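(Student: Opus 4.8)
The plan is to reduce statement (2) to statement (1) by a counting argument, and to prove (1) by an explicit analysis of the operator $\mathscr{P}$ together with the Verschiebung.

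First observe that $\mathscr{P}=F-\id$ is $\mathbb{Z}/p^n\mathbb{Z}$-linear: indeed $F$ is a ring endomorphism fixing $W_n(\mathbb{F}_p)=\mathbb{Z}/p^n\mathbb{Z}$ pointwise, hence is $\mathbb{Z}/p^n\mathbb{Z}$-linear, and therefore so is $\mathscr{P}$. Thus $M_n:=W_n(K)/\mathscr{P}(W_n(K))$ is a $\mathbb{Z}/p^n\mathbb{Z}$-module. Granting (1), statement (2) is immediate: for $K=\mathbb{F}_q$ with $q=p^d$ the group $W_n(\mathbb{F}_q)$ is finite of order $q^n=p^{dn}$, its subgroup $\ker\mathscr{P}=W_n(\mathbb{F}_p)=\mathbb{Z}/p^n\mathbb{Z}$ has order $p^n$, so $|M_n|=p^{dn}/p^{(d-1)n}=p^n$; and a free $\mathbb{Z}/p^n\mathbb{Z}$-module of order $p^n$ has rank $1$, whence $M_n\cong\mathbb{Z}/p^n\mathbb{Z}$. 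So the whole problem is (1).

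For (1), I would use the structure theory of modules over the principal ideal ring $\mathbb{Z}/p^n\mathbb{Z}$: since every such module is a direct sum of cyclic modules $\mathbb{Z}/p^j\mathbb{Z}$, the module $M_n$ is free iff it has no summand with $j<n$, which happens iff $M_n[p]=p^{n-1}M_n$ (the inclusion $p^{n-1}M_n\subseteq M_n[p]$ being automatic). Write $W_m:=W_m(K)$, and introduce the Verschiebung $V\colon W_{n-1}\to W_n$, $(c_0,\dots,c_{n-2})\mapsto(0,c_0,\dots,c_{n-2})$, and the truncation $\rho\colon W_n\to W_{n-1}$ dropping the last coordinate; both commute with $F$, hence with $\mathscr{P}$, and one computes $p\cdot x=V(F(\rho x))$ and $p^{n-1}x=V^{n-1}(x_0^{p^{n-1}})$. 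Because $K$ is perfect, $F$ is bijective, so $p^{n-1}W_n=V^{n-1}(W_1)=\ker\rho$; passing to $M_n$ this gives $p^{n-1}M_n=\ker(\bar\rho\colon M_n\to M_{n-1})$, where $\bar\rho$ is induced by $\rho$.

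The crux is the following lemma on $\mathscr{P}$ and $V$, which I would prove for all $K$ by induction on $n$: if $b\in W_{n-1}$ and $V(b)\in\mathscr{P}(W_n)$, then $b\in\mathscr{P}(W_{n-1})$. Applying $\rho$ to $V(b)=\mathscr{P}(w)$ yields $V(\rho b)\in\mathscr{P}(W_{n-1})$, so by induction $\rho b\in\mathscr{P}(W_{n-2})$; subtracting a suitable $\mathscr{P}(\widetilde c)$ from $b$ (which alters neither the hypothesis nor the conclusion) reduces to $b\in\ker\rho$, i.e. $b=V^{n-2}(\beta)$ with $\beta\in W_1=K$. In this remaining case one writes any preimage $w$ with $\mathscr{P}(w)=V^{n-1}(\beta)$ as $w=(w',0)+V^{n-1}(w_{n-1})$ with $w'\in W_{n-1}(\mathbb{F}_p)$, and uses additivity of $\mathscr{P}$ together with $\mathscr{P}((w',0))=0$ to conclude $\beta=w_{n-1}^p-w_{n-1}\in\mathscr{P}(K)$, hence $b=V^{n-2}(\beta)\in\mathscr{P}(W_{n-1})$. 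Granting the lemma, take $m\in M_n[p]$ with representative $x$; then $px=V(F(\rho x))\in\mathscr{P}(W_n)$, the lemma gives $F(\rho x)\in\mathscr{P}(W_{n-1})$, and bijectivity of $F$ (perfectness) yields $\rho x\in\mathscr{P}(W_{n-1})$, i.e. $m\in\ker\bar\rho=p^{n-1}M_n$. Thus $M_n[p]=p^{n-1}M_n$ and $M_n$ is free.

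I expect the main obstacle to be the lemma, and more precisely the control of the Witt-addition correction terms; the point that makes it tractable is that $\mathscr{P}$ is additive and annihilates the $W_{n-1}(\mathbb{F}_p)$-part, so that the messy corrections never actually intervene. The second, conceptually essential, place where work is needed is the use of perfectness: it enters exactly in inverting $F$ and in the identification $p^{n-1}W_n=V^{n-1}(W_1)$. Since freeness genuinely fails for imperfect $K$, any correct argument must invoke perfectness at precisely these two steps, and I would take care to isolate them.
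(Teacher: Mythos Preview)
Your argument is correct. Both parts are sound: the structure-theorem criterion $M_n[p]=p^{n-1}M_n$ is valid (any $\mathbb{Z}/p^n\mathbb{Z}$-module is an abelian group of bounded exponent, hence a direct sum of cyclics by Pr\"ufer's theorem), the inductive lemma on $V$ and $\mathscr{P}$ goes through as you wrote it, and the use of perfectness to invert $F$ and to identify $p^{n-1}W_n=V^{n-1}(W_1)$ is exactly where it is needed.

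Your route is genuinely different from the paper's, though the two are in some sense dual. The paper checks the criterion $M_n[p^{n-1}]\subseteq pM_n$ rather than your $M_n[p]\subseteq p^{n-1}M_n$, and does so without induction: using that for perfect $K$ one has $W_n(K)/p^sW_n(K)\cong W_s(K)$, it shows that any $a$ whose first coordinate $a_0$ lies outside $\mathscr{P}(K)$ has full order $p^n$ in $M_n$; hence if $p^{n-1}a=0$ in $M_n$ then $a_0\in\mathscr{P}(K)$, and subtracting $\mathscr{P}((x_0,0,\dots,0))$ with $a_0=x_0^p-x_0$ lands $a$ in $pW_n$ modulo $\mathscr{P}(W_n)$. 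This is shorter and avoids the inductive bookkeeping. Your approach, on the other hand, isolates a clean lemma (if $V(b)\in\mathscr{P}(W_n)$ then $b\in\mathscr{P}(W_{n-1})$) that holds for \emph{all} $K$, not only perfect ones, and makes the role of the Verschiebung completely explicit; perfectness then enters only at the very last step. Either way the counting argument for part~(2) is the same.
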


\begin{proof}
(1) First note that since $K$ is perfect, we have $W_n(K)/p^sW_n(K)\cong W_s(K)$ for every $0\le s\le n$. 

Secondly, let $a\in W_n(K)$ be an element such that its image $a_0$ in $K$ is not in $\mathscr{P}(K)$. We claim that the order of $a$ in $W_n(K)/\mathscr{P}(W_n(K))$ is $p^n$. Indeed, suppose $s<n$ is such that $p^sa=0$ in $W_n(K)/\mathscr{P}(W_n(K))$, i.e., $p^sa=\mathscr{P}(y)$ for some $y\in W_n(K)$. Then $\mathscr{P}(y)=0$ in $W_n(K)/p^sW_n(K)=W_s(K)$. Thus $y=k\in \mathbb{Z}/p^s\mathbb{Z}\subseteq W_n(K)/p^sW_n(K)$ (as ${\rm ker}(\mathscr{P})=\mathbb{Z}/p^n\mathbb{Z}$), so $y=k+p^sz$ for some integer $k$ and $z\in W_n(K)$. But then $p^sa=\mathscr{P}(y)=\mathscr{P}(p^sz)$, so if $z_0$ is the image of $z$ in $K$ then $a_0=\mathscr{P}(z_0)$, which is a contradiction.

Finally, take $a\in W_n(K)$ such that $p^{n-1}a=0$ in $W_n(K)/\mathscr{P}(W_n(K))$, and consider its image $a_0$ in $K$. We have shown that $a_0$ must be in $\mathscr{P}(K)$, i.e., $a_0=x_0^p-x_0$ for some $x_0$ in $K$. Let $x:=(x_0,0,\dots,0)\in W_n(K)$. We have $a-\mathscr{P}(x)=py$ for some $y\in W_n(K)$ (again using that $K$ is perfect). Thus $a=py$ in $W_n(K)/\mathscr{P}(W_n(K))$, proving freeness.  

(2) Since the kernel of $\mathscr{P}:W_n(\mathbb{F}_q)\to W_n(\mathbb{F}_q)$ is 
$\mathbb{Z}/p^n\mathbb{Z}$, it follows that the cokernel of 
$\mathscr{P}$ has order $p^n$. Thus $W_n(\mathbb{F}_q)/\mathscr{P}(W_n(\mathbb{F}_q))$ is abelian of order $p^n$, so the claim follows from Part (1).
\end{proof}

\begin{remark}
If $K$ is not perfect then for instance $W_2(K)$ is not a free $\mathbb{Z}/p^2\mathbb{Z}$-module. Indeed, take an element $(0,a)$ in $W_2(K)$, where $a\in K$ is not a $p$th power. Then $p(0,a)=(0,1)(0,a)=0$, but $(0,a)\ne p(x,y)$ for any $x,y$, since $p(x,y)=(0,x^p)$.
\end{remark}

\subsection{Twists for abelian groups and torsors} Recall that an interesting invariant of a tensor category $\mathcal{C}$ over $K$ is the group of tensor structures on the identity functor of $\mathcal{C}$ (i.e., the group of isomorphism classes of tensor autoequivalences of $\mathcal{C}$ which act trivially on the underlying abelian category) up to an isomorphism \cite{da,bc}. This group is called {\em the second invariant (or lazy) cohomology group} of $\mathcal{C}$ and denoted by $H^2_{\rm inv}(\mathcal{C},K)$. 

In particular, if $\mathcal{C}:={\rm Rep}_K(A)$ is the representation category of a finite abelian group $A$ then $H^2_{\rm inv}(A,K):=H^2_{\rm inv}(\mathcal{C},K)$ is the group of gauge equivalence classes of twists for the Hopf algebra $K[A]$ \cite{eg2}. 

\begin{lemma}\label{torsor}
Let $A$ be a finite abelian group. We have a canonical group isomorphism $H^2_{\rm inv}(A,K)\cong {\rm Hom}(G,A)$, where $G:={\rm Aut}(\overline{K}/K)={\rm Gal}(K^s/K)$. \footnote{When considering {\rm Hom} from a profinite group, as usual it means continuous homomorphisms.}
\footnote{$K^s$ is the separable closure of $K$.}
\end{lemma}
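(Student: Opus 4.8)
The plan is to show that a gauge class of twists of $K[A]$ is the same datum as an $A$-torsor over $K$, and then to use that $A$-torsors over $K$ are classified by $H^1(G,A)=\Hom(G,A)$ (continuous homomorphisms), the last equality holding because $A$ is a finite abelian group on which $G$ acts trivially, so that $1$-cocycles are honest homomorphisms and all coboundaries vanish. Concretely, I would run this as a Galois descent computation for the invariant cohomology of $\mathcal{C}:=\Rep_K(A)$ along the extension $\overline{K}/K$, the relevant torsor being the torsor of $\overline{K}$-trivializations of a twist.

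First I would compute the needed low-degree invariant cohomology over $\overline{K}$. The group $H^1_{\rm inv}(A,\overline{K})$ of monoidal natural automorphisms of the identity functor of $\Rep_{\overline{K}}(A)$ is precisely the group of group-like elements of $\overline{K}[A]$, namely $A$ itself; since these group-likes are the $K$-rational elements $A\subset K[A]$, the Galois group $G$ acts trivially on $H^1_{\rm inv}(A,\overline{K})=A$. Next --- and this is the crucial input --- I would prove the vanishing $H^2_{\rm inv}(A,\overline{K})=0$, i.e. that every twist of $\overline{K}[A]$ is gauge trivial. Over an algebraically closed field the gauge classes of twists of $\overline{K}[A]$ are governed by a Schur-multiplier/alternating-bicharacter class valued in $\overline{K}^\times$, and in characteristic $p$ this group vanishes because $\overline{K}^\times$ has no nontrivial $p$-power torsion and the Frobenius is surjective on $\overline{K}^\times$. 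This is exactly where the characteristic-$p$ hypothesis (and the $p$-group nature of the relevant part of $A$) enters decisively.

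Granting these two facts, the Galois/Hochschild--Serre descent exact sequence for $H^*_{\rm inv}$ along $\overline{K}/K$ begins
$$0\to H^1\bigl(G,H^1_{\rm inv}(A,\overline{K})\bigr)\to H^2_{\rm inv}(A,K)\to H^2_{\rm inv}(A,\overline{K})^{G},$$
and the vanishing of the right-hand term yields a canonical isomorphism $H^2_{\rm inv}(A,K)\cong H^1(G,A)=\Hom(G,A)$. Explicitly the map sends a twist $J$ to the class of the $A$-torsor of its $\overline{K}$-gauge-trivializations: one trivializes $J$ over $\overline{K}$, compares the Galois translates of the chosen trivialization, and records the resulting $1$-cocycle valued in the automorphism group $A$; gauge-equivalent twists yield cohomologous cocycles. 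That the bijection is a group homomorphism follows since the product of twists corresponds to the (mutually commuting) composition of the trivializations, which matches the abelian group law on $\Hom(G,A)$, and the construction involves no choices beyond the canonical descent. The main obstacle is the vanishing $H^2_{\rm inv}(A,\overline{K})=0$: establishing it requires the structure theory of twists of group algebras of abelian $p$-groups in characteristic $p$ (e.g.\ via \cite{eg2} or a direct cocycle computation), and it is precisely the step controlled by the characteristic-$p$ hypothesis, so that hypothesis cannot be dropped.
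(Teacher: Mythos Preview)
Your argument is correct and is essentially a cohomological repackaging of the paper's proof. Both rest on the vanishing $H^2_{\rm inv}(A,\overline{K})=0$ (which the paper simply cites from \cite[Theorem~6.5]{aegn}) and then descend along $\overline{K}/K$; the paper does this concretely by exhibiting the $A$-torsor as the twisted algebra $(K[A]_J)^*$, constructing the inverse map from an arbitrary $A$-torsor back to a twist via its decomposition into cyclic field extensions, and checking the group law by hand via $(B_1\otimes B_2)^A$, whereas you encode the same descent as a low-degree exact sequence. Your ``torsor of $\overline{K}$-trivializations of $J$'' is the same object, and the exact-sequence formalism buys you bijectivity and additivity for free once the sequence itself is justified (e.g.\ by observing, as the paper does in the proof of Theorem~\ref{sweedcoh}, that the Sweedler complex is the complex of $K$-points of connected commutative unipotent algebraic groups, so Galois descent applies).

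One caveat on your sketch of the key vanishing: the ``Schur-multiplier/alternating-bicharacter valued in $\overline{K}^\times$'' heuristic is the picture for group algebras of order prime to $p$, not for $p$-groups in characteristic $p$, where $\overline{K}[A]$ is local and the vanishing comes instead from unipotent/Artin--Schreier--Witt considerations. Indeed the lemma as literally stated is false for a finite abelian group that is not a $p$-group: for $A=(\mathbb{Z}/\ell\mathbb{Z})^2$ with $\ell\ne p$ and $K=\overline{K}$ one has $H^2_{\rm inv}(A,\overline{K})\cong H^2(A^\vee,\overline{K}^\times)\cong\mathbb{Z}/\ell\mathbb{Z}\ne 0$ while $\Hom(G,A)=0$. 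So both the statement and your argument should be read under the standing hypothesis (implicit from the section title and the applications) that $A$ is a $p$-group.
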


\begin{proof}
Let $J$ be a twist for $K[A]$, and consider the twisted $K$-algebra $(K[A]_J)^*$. Observe that (up to $K$-algebra isomorphism) this algebra depends only on $[J]$. Since by \cite[Theorem 6.5]{aegn} every twist for $\overline{K}[A]$ is trivial, it follows that $(K[A]_J)^*\otimes_K \overline{K}$ and ${\rm Fun}(A,\overline{K})$ are isomorphic as $\overline{K}$-algebras. Thus, $(K[A]_J)^*$ is a semisimple commutative $K$-algebra. Furthermore, $(K[A]_J)^*$ is an $A$-algebra, which is isomorphic to the regular representation of $A$ as an $A$-module. Thus $(K[A]_J)^*$ is an $A$-torsor.

Conversely, suppose $B$ is an $A$-torsor, i.e., a commutative semisimple $K$-algebra with an $A$-action such that $B\ot _K \overline{K}\cong {\rm Fun}(A,\overline{K})$. By Wedderburn theorem, $B$ decomposes uniquely into a direct sum of field extensions $L_i$ of $K$: $B=\bigoplus_i L_i$. Since the space of $A$-invariants in $B$ is $1$-dimensional, $A$ acts transitively on the set of fields $L_i$. Let $H\subseteq A$ be the stabilizer of $L:=L_1$. Clearly $L$ is a cyclic extension of $K$ with Galois group $H$. Then it is well known that $L\cong (K[H]^*)_J$ for a unique (up to gauge equivalence) Hopf $2$-cocycle $J$ for $K[H]^*$. Viewing $J$ as a twist for $K[H]$ (hence for $K[A]$), it is easy to see that the class $[J]$ is uniquely determined by the isomorphism class of the $A$-torsor $B$.

Finally we note that $A$-torsors form an abelian group under the product rule $(B_1,B_2)\mapsto (B_1\otimes B_2)^A$, where $a\in A$ acts on $B_1$ by $a$ and on $B_2$ by $a^{-1}$, and that $(K[A]_{IJ})^*\cong ((K[A]_I)^*\otimes (K[A]_J)^*)^A$ (see, e.g., \cite[Remark 3.12]{aegn}).

It now follows from the above that the group $H^2_{\rm inv}(A,K)$ is canonically isomorphic to the group of $A$-torsors over $K$. Since the latter is canonically isomorphic to the Galois cohomology group $H^1(G,A)=\Hom(G,A)$, the claim follows.
\end{proof} 

\subsection{Invariant cohomology of abelian groups} Let $A$ be a finite abelian group of exponent dividing $p^n$. Let $G$ be as in Section 3.2, and let $G_n$ be its maximal abelian quotient of exponent dividing $p^n$. Then $\Hom(G,A)=\Hom(G_n,A)$. Thus by Lemma \ref{torsor}, we have a canonical group isomorphism 
\begin{equation}\label{gn}
H^2_{\rm inv}(A,K)\cong \Hom(G_n,A).
\end{equation}

\begin{theorem}\label{invtwpabconc}
Let $A$ be a finite abelian group of exponent dividing $p^n$. Then the following hold:
\begin{enumerate}
\item
We have a canonical group isomorphism 
$$H^2_{\rm inv}(A,K)\cong {\rm Hom}(A^{\vee},W_n(K)/\mathscr{P}(W_n(K))),$$ where $A^{\vee}:={\rm Hom}(A,\mathbb{Z}/p^n\mathbb{Z})$.
\item
If moreover $K$ is perfect then we have a canonical group isomorphism 
$$H^2_{\rm inv}(A,K)\cong A\otimes_{\mathbb{Z}/p^n\mathbb{Z}} (W_n(K)/\mathscr{P}(W_n(K))).$$
\end{enumerate}
\end{theorem}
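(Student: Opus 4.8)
The plan is to start from the canonical isomorphism (\ref{gn}), $H^2_{\rm inv}(A,K)\cong\Hom(G_n,A)$, and to rewrite $\Hom(G_n,A)$ using Artin--Schreier--Witt theory. The essential external input is the classical Artin--Schreier--Witt isomorphism $\Hom(G_n,\mathbb{Z}/p^n\mathbb{Z})\cong W_n(K)/\mathscr{P}(W_n(K))$, in which $\mathscr{P}=F-\id$ is precisely the operator introduced in Section 3.1; note that a continuous homomorphism $G\to\mathbb{Z}/p^n\mathbb{Z}$ automatically factors through the maximal abelian quotient $G_n$ of exponent dividing $p^n$, which is why $G_n$ rather than $G$ appears. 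Throughout I write $R:=\mathbb{Z}/p^n\mathbb{Z}$ and $M:=W_n(K)/\mathscr{P}(W_n(K))$.

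For part (1), I would use that $\Hom(-,R)$ is a duality on the category of finite $R$-modules, so that the evaluation map $A\to A^{\vee\vee}=\Hom(A^\vee,R)$ is a natural isomorphism. Inserting this into $\Hom(G_n,A)$ and swapping the two Hom's by the standard natural adjunction $\Hom(X,\Hom(Y,R))\cong\Hom(Y,\Hom(X,R))$ gives
$$\Hom(G_n,A)\cong\Hom(A^\vee,\Hom(G_n,R))\cong\Hom(A^\vee,M).$$
Since $A^\vee$ is finite, the swap involves no topological subtlety, and naturality of both steps makes the resulting isomorphism canonical, yielding (1).

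For part (2), assume $K$ is perfect, so that $M$ is a free $R$-module by Lemma \ref{wittfq}(1), say $M\cong\bigoplus_i R$. I would compare the two sides through the canonical map
$$A\otimes_R M\to\Hom(A^\vee,M),\qquad a\otimes m\mapsto(\chi\mapsto\chi(a)\,m),$$
and verify that it is an isomorphism when $M$ is free: as $A^\vee$ is finite, hence finitely presented over the Noetherian ring $R$, the functor $\Hom(A^\vee,-)$ commutes with the direct sum, so $\Hom(A^\vee,M)\cong\bigoplus_i\Hom(A^\vee,R)\cong\bigoplus_i A$, matching $A\otimes_R M\cong\bigoplus_i A$ compatibly with the comparison map. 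Combining this with (1) gives the desired canonical isomorphism $H^2_{\rm inv}(A,K)\cong A\otimes_R M$.

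The only genuinely nontrivial ingredient, and hence the main obstacle, is the Artin--Schreier--Witt identification of $\Hom(G_n,\mathbb{Z}/p^n\mathbb{Z})$ with $W_n(K)/\mathscr{P}(W_n(K))$; granting this classical fact, everything else reduces to the formal homological algebra of finite modules over $\mathbb{Z}/p^n\mathbb{Z}$ (the self-duality $\Hom(-,R)$, the Hom-swap adjunction, and the tensor--Hom comparison for free modules). A secondary point requiring care is keeping all identifications canonical and handling the profinite/continuity bookkeeping in the Hom-swap, but this is harmless precisely because $A$ and $A^\vee$ are finite.
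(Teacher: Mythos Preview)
Your proof is correct and follows essentially the same route as the paper: start from the identification $H^2_{\rm inv}(A,K)\cong\Hom(G_n,A)$, invoke Artin--Schreier--Witt, and use a Hom-swap together with Lemma~\ref{wittfq}(1) for the perfect case. The only cosmetic difference is that the paper quotes Artin--Schreier--Witt in the dual form $G_n\cong\Hom(W_n(K)/\mathscr{P}(W_n(K)),\mathbb{Z}/p^n\mathbb{Z})$ and then applies the identity $\Hom(B^{\vee},A)=\Hom(A^{\vee},B)$, whereas you use the form $\Hom(G_n,\mathbb{Z}/p^n\mathbb{Z})\cong W_n(K)/\mathscr{P}(W_n(K))$ and pass through $A\cong A^{\vee\vee}$; these are the same maneuver read in opposite directions.
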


\begin{proof}
(1) Recall that Artin-Schreier-Witt theory provides a canonical group isomorphism $$G_n\xrightarrow{\cong} \Hom(W_n(K)/\mathscr{P}(W_n(K)),\mathbb{Z}/p^n\mathbb{Z})$$ (see, e.g., \cite[VI, p.330--332]{l}). Thus we get from (\ref{gn}) a canonical group isomorphism
$$H^2_{\rm inv}(A,K)\cong \Hom(\Hom(W_n(K)/\mathscr{P}(W_n(K)),\mathbb{Z}/p^n\mathbb{Z}),A).$$
The claim follows now from the fact that ${\rm Hom}(B^{\vee},A)={\rm Hom}(A^{\vee},B)$ for every $B$.

(2) By Lemma \ref{wittfq}(1), $W_n(K)/\mathscr{P}(W_n(K))$ is a free $\mathbb{Z}/p^n\mathbb{Z}$-module. Therefore the group 
$${\rm Hom}(A^{\vee},W_n(K)/\mathscr{P}(W_n(K)))\cong \Hom(\Hom(W_n(K)/\mathscr{P}(W_n(K)),\mathbb{Z}/p^n\mathbb{Z}),A)$$ is the same as the group $A\otimes_{\mathbb{Z}/p^n\mathbb{Z}} (W_n(K)/\mathscr{P}(W_n(K)))$, as desired.
\end{proof}

\begin{corollary}\label{twistsabelian}
We have a group isomorphism 
$$H^2_{\rm inv}(\mathbb{Z}/p^n\mathbb{Z},K)\cong W_n(K)/\mathscr{P}(W_n(K)).$$
In particular, we have a group isomorphism 
$$H^2_{\rm inv}(\mathbb{Z}/p^n\mathbb{Z},\mathbb{F}_q)\cong \mathbb{Z}/p^n\mathbb{Z}.$$
\end{corollary}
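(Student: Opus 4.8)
The plan is to derive this directly from Theorem \ref{invtwpabconc}(1), since the cyclic case requires no additional input beyond identifying the relevant $\Hom$-group and recording the structure of the target module. First I would set $A=\mathbb{Z}/p^n\mathbb{Z}$, which has exponent dividing $p^n$, and compute its dual $A^{\vee}=\Hom(\mathbb{Z}/p^n\mathbb{Z},\mathbb{Z}/p^n\mathbb{Z})\cong \mathbb{Z}/p^n\mathbb{Z}$. Feeding this into the isomorphism of Theorem \ref{invtwpabconc}(1) gives
$$H^2_{\rm inv}(\mathbb{Z}/p^n\mathbb{Z},K)\cong \Hom\left(\mathbb{Z}/p^n\mathbb{Z},\,W_n(K)/\mathscr{P}(W_n(K))\right).$$

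The key observation is that the target module $M:=W_n(K)/\mathscr{P}(W_n(K))$ is annihilated by $p^n$, hence is a $\mathbb{Z}/p^n\mathbb{Z}$-module: indeed $W_n(K)$ is a ring of characteristic $p^n$, so $p^nW_n(K)=0$ and the quotient inherits this. For any $\mathbb{Z}/p^n\mathbb{Z}$-module $M$ one has the canonical evaluation-at-$1$ isomorphism $\Hom(\mathbb{Z}/p^n\mathbb{Z},M)\xrightarrow{\cong} M$, since every element of $M$ is killed by $p^n$ and thus determines a well-defined homomorphism out of $\mathbb{Z}/p^n\mathbb{Z}$. Combining this with the display above yields the first isomorphism $H^2_{\rm inv}(\mathbb{Z}/p^n\mathbb{Z},K)\cong W_n(K)/\mathscr{P}(W_n(K))$.

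For the ``in particular'' statement I would specialize $K=\mathbb{F}_q$ and invoke Lemma \ref{wittfq}(2), which identifies $W_n(\mathbb{F}_q)/\mathscr{P}(W_n(\mathbb{F}_q))\cong \mathbb{Z}/p^n\mathbb{Z}$; substituting this into the first part immediately gives $H^2_{\rm inv}(\mathbb{Z}/p^n\mathbb{Z},\mathbb{F}_q)\cong \mathbb{Z}/p^n\mathbb{Z}$.

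There is no genuine obstacle here, as the statement is a formal consequence of Theorem \ref{invtwpabconc} together with Lemma \ref{wittfq}(2). The only point that warrants care is verifying that $W_n(K)/\mathscr{P}(W_n(K))$ is really a $\mathbb{Z}/p^n\mathbb{Z}$-module (that is, that $p^n$ acts as zero), which is precisely what makes $\Hom(\mathbb{Z}/p^n\mathbb{Z},-)$ collapse to the identity functor rather than to a proper submodule of $p^n$-torsion.
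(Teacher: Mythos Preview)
Your proof is correct and follows essentially the same approach as the paper: apply Theorem \ref{invtwpabconc}(1) with $A=\mathbb{Z}/p^n\mathbb{Z}$ to get the first isomorphism, then invoke Lemma \ref{wittfq}(2) for the finite-field case. The paper's proof is in fact more terse than yours---it leaves the identification $\Hom(\mathbb{Z}/p^n\mathbb{Z},W_n(K)/\mathscr{P}(W_n(K)))\cong W_n(K)/\mathscr{P}(W_n(K))$ implicit---so your extra paragraph explaining that $p^nW_n(K)=0$ (hence the evaluation-at-$1$ map is an isomorphism) is a useful clarification rather than a deviation.
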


\begin{proof}
By Theorem \ref{invtwpabconc}(1), $H^2_{\rm{inv}}(\mathbb{Z}/p^n\mathbb{Z},\mathbb{F}_q)\cong W_n(\mathbb{F}_q)/\mathscr{P}(W_n(\mathbb{F}_q))$, so the second claim follows from Lemma \ref{wittfq}(2).
\end{proof}

\begin{remark}
(1) Theorem \ref{invtwpabconc}(1) implies that if $K$ is algebraically closed then $H^2_{\rm{inv}}(A,K)=0$, which agrees with \cite[Proposition 5.7]{eg5} for $i=2$.

(2) Theorem \ref{invtwpabconc}(1) was obtained by Guillot \cite{gu} for $p=2$ and $n=1$. 
\end{remark}

\subsection{Sweedler cohomology of algebras of functions on abelian groups} Let $A$ be a finite abelian group, and let $\mathcal{O}(A)$ be the Hopf algebra of functions on $A$ with values in $K$. Recall that $H^2_{\rm{inv}}(A,K)$ coincides with the second Sweedler cohomology group $H^2_{\rm{Sw}}(\mathcal{O}(A),K)$ with coefficients in $K$. 

\begin{theorem}\label{sweedcoh}
Let $A$ be a finite abelian group of exponent dividing $p^n$. Then the Sweedler cohomology of $\mathcal{O}(A)$ with coefficients in $K$ is as follows: 
\begin{enumerate}
\item
$H^1_{\rm Sw}(\mathcal{O}(A),K)=A$.
\item
$H^2_{\rm Sw}(\mathcal{O}(A),K)={\rm Hom}(A^{\vee},W_n(K)/\mathscr{P}(W_n(K)))$.
\item
$H^i_{\rm Sw}(\mathcal{O}(A),K)=0$ for every $i\ge 3$.
\end{enumerate} 
\end{theorem}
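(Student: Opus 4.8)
The plan is to treat the three degree ranges separately, all organized around the self-duality $\mathcal{O}(A)=(K[A])^*$ and the dictionary between Sweedler cocycles for $\mathcal{O}(A)$ and twists for $K[A]$. Part (2) is the most direct: a Sweedler $2$-cocycle for $\mathcal{O}(A)=(K[A])^*$ is exactly a twist for $K[A]$, compatibly with equivalences, so $H^2_{\rm Sw}(\mathcal{O}(A),K)=H^2_{\rm inv}(A,K)$ as recalled just above the statement, and the value $\Hom(A^\vee,W_n(K)/\mathscr{P}(W_n(K)))$ is then immediate from Theorem \ref{invtwpabconc}(1). For part (1) I would read $H^1$ off Sweedler's complex with trivial coefficients: the coboundary out of $C^0=K^\times$ is trivial (the $\mathcal{O}(A)$-action on $K$ is through the counit), and the $1$-cocycle condition forces a convolution-invertible map $\mathcal{O}(A)\to K$ to be a $K$-algebra homomorphism. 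Thus $H^1_{\rm Sw}(\mathcal{O}(A),K)$ is the convolution group of algebra maps $\mathcal{O}(A)\to K$, i.e. the $K$-points of $\mathrm{Spec}\,\mathcal{O}(A)$; since $A$ is a constant finite group this is $A$ with its own group law, giving $H^1=A$.

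Part (3) is where the real work lies, and I would obtain it by Galois descent together with a cohomological-dimension bound. The input is \cite[Proposition 5.7]{eg5}, which computes the Sweedler cohomology of $\mathcal{O}(A)$ over $\bar K$, where only degrees $0$ and $1$ survive (contributing $\bar K^\times$ and $A$). Descending along $\bar K/K$ then expresses $H^i_{\rm Sw}(\mathcal{O}(A),K)$ through the Galois cohomology groups $H^{i-1}(G,A)$, where $G=\mathrm{Gal}(\bar K/K)$ acts trivially on the constant module $A$. This degree shift is already visible in parts (1) and (2), since $H^0(G,A)=A$ and $H^1(G,A)=\Hom(G,A)$ recover the answer $A$ and the torsor description of Lemma \ref{torsor}. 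Now $A$ is a finite abelian $p$-group and $\chara K=p$, so the cohomological $p$-dimension of $K$ is at most $1$; hence $H^{i-1}(G,A)=0$ whenever $i-1\ge 2$, i.e. for all $i\ge 3$, which is the assertion.

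I expect the main obstacle to be the cleanliness of this descent, specifically ensuring that the surviving degree-$0$ coefficient $\bar K^\times$ contributes no Brauer-type term to $H^i_{\rm Sw}$ in degrees $\ge 2$. The reason it cannot is that the objects being classified are twisted forms of the commutative $A$-algebra $\mathcal{O}(A)$, i.e. $A$-torsors, so only the abelian coefficient $A$ enters and the comparison genuinely reduces to $H^{i-1}(G,A)$ — exactly as worked out in Lemma \ref{torsor} for $i=2$. Granting this and the bound $\mathrm{cd}_p(K)\le 1$, the vanishing in all degrees $\ge 3$ follows.
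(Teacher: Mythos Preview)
Your treatment of (1) and (2) matches the paper. For (3) your route is different from the paper's and, as written, has a gap.

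The comparison you assert --- that Galois descent expresses $H^i_{\rm Sw}(\mathcal{O}(A),K)$ through $H^{i-1}(G,A)$ --- needs an argument, and your reason for dismissing a Brauer-type contribution (``only the abelian coefficient $A$ enters, as in the torsor picture'') is an intuition specific to degree $2$ that does not carry over. The missing ingredient, which you never invoke, is that the normalized cochain groups $C^i(L)=(L[A]^{\otimes i})^\times_1$ are, for $i\ge 1$, connected commutative \emph{unipotent} algebraic groups over $K$ (iterated extensions of $\mathbb{G}_a$), since $A$ is a $p$-group in characteristic $p$ and the augmentation ideal of $K[A]$ is nilpotent. Because $d\colon C^0\to C^1$ is the trivial map, $H^i_{\rm Sw}$ for $i\ge 1$ is already computed by the truncated complex $C^{\ge 1}$, all of whose terms are unipotent; the $\bar K^\times$ in degree~$0$ never enters, and your Brauer worry dissolves. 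With unipotence in hand your hypercohomology comparison can be made rigorous: one spectral sequence degenerates since $H^q(G,C^i(\bar K))=0$ for all $q\ge 1$, the other collapses to the single row $H^p(G,A)$, and then $\mathrm{cd}_p(K)\le 1$ finishes.

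The paper bypasses any spectral sequence and argues directly from the same unipotence. For each $n\ge 2$ the vanishing of $H^n$ and $H^{n+1}$ over $\bar K$ (from \cite[Proposition~5.7]{eg5}) yields a short exact sequence of unipotent algebraic groups $0\to C^{n-1}/D^{n-1}\to C^n\to D^{n+1}\to 0$, where $D^i=\ker d^i$; taking $K$-points and using only $H^1(K,C^{n-1}/D^{n-1})=0$ gives $C^n(K)\twoheadrightarrow D^{n+1}(K)$, hence $H^{n+1}_{\rm Sw}(\mathcal{O}(A),K)=0$. This is slightly more elementary than your route, needing only additive Hilbert~90 rather than the full bound $\mathrm{cd}_p(K)\le 1$; conversely, your approach (once the unipotence is supplied) yields the clean identification $H^i_{\rm Sw}(\mathcal{O}(A),K)\cong H^{i-1}(G,A)$ for all $i\ge 1$, which is a bit more than the paper states.
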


\begin{proof}
(1) is clear and (2) is Theorem \ref{invtwpabconc}(1). To prove (3) consider the normalized complex computing $H^i_{\rm Sw}(\mathcal{O}(A),K)$: 
$$C^0(K)\to C^1(K)\to C^2(K)\to \cdots,$$ where 
$C^i$ is the algebraic group such that for any field $L$, $C^i(L)=(L[A]^{\otimes i})^{\times}_1$ is the group of invertible elements $a$ in $L[A]^{\otimes i}$ with 
$\varepsilon(a)=1$. Then $C^i$ is a connected commutative unipotent algebraic group over $K$ (i.e., an iterated extension of $\mathbb{G}_a$). 

Now fix $n\ge 2$. Since by \cite[Proposition 5.7]{eg5}, $$H^n_{\rm Sw}(\mathcal{O}(A),\overline{K})=H^{n+1}_{\rm Sw}(\mathcal{O}(A),\overline{K})=0,$$ we have a short exact sequence 
$$0\to C^{n-1}/D^{n-1}\to C^n\to D^{n+1}\to 0,$$ where $D^{i}\subseteq C^i$ is the kernel of the differential map $d:C^{i}\to C^{i+1}$. Thus we have an exact sequence 
$$0\to (C^{n-1}/D^{n-1})(K)\to C^n(K)\to D^{n+1}(K)\to H^1(K,C^{n-1}/D^{n-1}),$$ 
where $$H^1(K,C^{n-1}/D^{n-1}):=H^1({\rm Gal}(\overline{K}/K),(C^{n-1}/D^{n-1})(\overline{K}))$$ is the Galois cohomology group. 
But since $C^{n-1}/D^{n-1}$ is an iterated extension of $\mathbb{G}_a$, and $H^1(K,\mathbb{G}_a)=0$, the Galois cohomology group $H^1(K,C^{n-1}/D^{n-1})$ vanishes. Thus we have a short exact sequence 
$$0\to (C^{n-1}/D^{n-1})(K)\to C^n(K)\to D^{n+1}(K)\to 0,$$ which implies that $H^{n+1}_{\rm Sw}(\mathcal{O}(A),K)=D^{n+1}(K)/d(C^n(K))=0$, as claimed.
\end{proof}

\end{document}